\documentclass[12pt]{amsart}
\usepackage{amsmath,amsthm,amsfonts,amscd,amssymb,eucal,latexsym,mathrsfs, appendix, yhmath, setspace}
\usepackage[numbers,sort&compress]{natbib}
\usepackage[all,cmtip]{xy}
\usepackage{enumerate}

\setlength{\textwidth}{15cm}
\setlength{\oddsidemargin}{4mm}
\setlength{\evensidemargin}{4mm}

\newtheorem{theorem}{Theorem}[section]
\newtheorem{corollary}[theorem]{Corollary}
\newtheorem{lemma}[theorem]{Lemma}
\newtheorem{proposition}[theorem]{Proposition}

\theoremstyle{definition}
\newtheorem{definition}[theorem]{Definition}

\newtheorem{notation}[theorem]{Notation}

\newcommand{\cA}{{\mathcal A}}

\newcommand{\cF}{{\mathcal F}}

\newcommand{\cM}{{\mathcal M}}

\newcommand{\cP}{{\mathcal P}}
\newcommand{\cQ}{{\mathcal Q}}

\newcommand{\cU}{{\mathcal U}}
\newcommand{\cV}{{\mathcal V}}

\newcommand{\Fb}{{\mathbb F}}

\newcommand{\Nb}{{\mathbb N}}

\newcommand{\Rb}{{\mathbb R}}

\newcommand{\Zb}{{\mathbb Z}}
\newcommand{\sA}{{\mathscr A}}

\newcommand{\sW}{{\mathscr W}}

\newcommand{\diam}{{\rm diam}}

\newcommand{\hnv}{{h^{\rm nv}}}

\allowdisplaybreaks

\begin{document}

\title{Combinatorial Independence and Naive Entropy}

\author{Hanfeng Li}
\author{Zhen Rong}

\address{\hskip-\parindent
H.L., Center of Mathematics, Chongqing University,
Chongqing 401331, China.\\
Department of Mathematics, SUNY at Buffalo,
Buffalo, NY 14260-2900, USA.}
\email{hfli@math.buffalo.edu}

\address{\hskip-\parindent
Z.R., College of Statistics and Mathematics, Inner Mongolia University of Finance and Economics, Hohhot 010000, China.}
\email{rongzhen@cqu.edu.cn}

\date{April 10, 2020}

\subjclass[2010]{37B40, 37B05, 37A35, 05D10}
\keywords{Naive entropy, combinatorial independence, Li-Yorke chaos, distal action, tame action}

\begin{abstract}
We study the independence density for finite families of finite tuples of sets for continuous actions of discrete groups on compact metrizable spaces.
We use it to show that actions with positive naive entropy are Li-Yorke chaotic and untame. In particular, distal actions have zero naive entropy. This answers a question of Lewis Bowen.
\end{abstract}

\maketitle

\section{Introduction} \label{S-introduction}

Let a countably infinite group $\Gamma$ act on a compact metrizable space $X$ continuously.
Motivated by the consideration in \cite{Bowen17} for the naive entropy of measure-preserving actions, Burton introduced
the {\it naive topological entropy} of $\Gamma\curvearrowright X$ in
\cite{Burton}. This is also studied in \cite{DFR}. For a finite open cover $\cU$ of $X$, denote by $N(\cU)$ the minimal cardinality of subcovers of $\cU$. For any nonempty finite subset $F$ of $\Gamma$, set $\cU^F=\bigvee_{s\in F}s^{-1}\cU$. The naive entropy of $\cU$ is defined as
$$ \hnv(\Gamma, \cU):=\inf_{F}\frac{1}{|F|}\log N(\cU^F),$$
where $F$ ranges over nonempty finite subsets of $\Gamma$.
The naive entropy of $\Gamma\curvearrowright X$ is defined as
$$ \hnv(\Gamma \curvearrowright X):=\sup_\cU\hnv(\Gamma, \cU)$$
for $\cU$ ranging over finite open covers of $X$.

It is known that the naive entropy $\hnv(\Gamma\curvearrowright X)$ coincides with the classical topological entropy when $\Gamma$ is amenable \cite[Theorem 6.8]{DFR}.
When $\Gamma$ is sofic, if $\hnv(\Gamma\curvearrowright X)=0$, then the sofic topological entropy of $\Gamma\curvearrowright X$ with respect to any sofic approximation sequence of $\Gamma$ is either $-\infty$ or $0$ \cite[Theorem 1.1]{Burton} \cite[Propositions 4.6 and 4.16]{KL13}.
When $\Gamma$ is nonamenable, $\hnv(\Gamma\curvearrowright X)$ is either $0$ or $\infty$ \cite[Section 2.2]{Burton}. Thus for nonamenable $\Gamma$, the naive entropy just describes the action $\Gamma\curvearrowright X$ as having positive entropy or zero entropy.

Initiated by the work of Blanchard \cite{Blanchard92, Blanchard93}, the local entropy theory developed quickly \cite{BGH, BGKM, CL, Glanser03, GY, HLY, HMRY, HY06, HY09, HYZ, KL07, KL09, KL13, KL16}.  A combinatorial approach was given to this theory in \cite{KL07}. It turns out that the combinatorial approach enables us to give a unified treatment for several dynamical properties. In general, one considers tuples of subsets of $X$ which have large independence sets in $\Gamma$ (see Definition~\ref{D-indep set} below), and then localizes to tuples of points in $X$ for which the tuple of subsets associated to any product neighborhood has large independence sets. Different largeness then corresponds to different dynamical properties. For instance, positive density corresponds to positive entropy for actions of amenable group \cite{HY06, KL07, KL16}, infinite sets corresponds to untameness \cite{KL07, KL16}, and arbitrary large finite sets corresponds to nonnullness \cite{KL07}. The correspondence between positive density and positive entropy also holds for actions of sofic groups \cite{KL13, KL16}, though the density is defined using the sofic approximation sequence instead.

A natural question is whether positive naive entropy can be studied using combinatorial independence. Indeed a notion of density was introduced for tuples of subsets for actions of any group in \cite[Definition 3.1]{KL13}, and the corresponding type of tuples of points in $X$ was also introduced in \cite[Definition 3.2]{KL13}. However, in general it is impossible to localize positive density from tuples of subsets to a tuple of points (see Proposition~\ref{P-positive naive no orbit IE}). The novelty in this paper is that we shall stay at the level of tuples of subsets and consider finite families of tuples of subsets instead of a single tuple (see Definition~\ref{D-indep density}). It turns out that this characterizes positive naive entropy (Theorem~\ref{T-positive naive}), and we can use it to obtain some interesting properties of actions with positive naive entropy.

The action $\Gamma\curvearrowright X$ is said to be {\it Li-Yorke chaotic} \cite{BGKM, LY} if there is an uncountable set $Y\subseteq X$ such that for any distinct $x, y\in Y$, one has
$$ \limsup_{\Gamma\ni s\to \infty}\rho(sx, sy)>0 \mbox{ and } \liminf_{\Gamma\ni s\to \infty}\rho(sx, sy)=0,$$
where $\rho$ is any given compatible metric on $X$. Using measure-dynamical techniques Blanchard et al.  showed first that positive entropy implies Li-Yorke chaos for continuous maps \cite{BGKM}. This was extended to actions of amenable groups \cite[Corollary 3.19]{KL07} and sofic groups \cite[Corollary 8.4]{KL13} using combinatorial independence. Here using independence density for finite families of tuples of subsets we extend this implication to actions of all groups.

\begin{theorem} \label{T-Li Yorke}
For any countably infinite group $\Gamma$, any continuous action of $\Gamma$ on a compact metrizable space with positive naive entropy is Li-Yorke chaotic.
\end{theorem}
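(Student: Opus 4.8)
The plan is to show that positive naive entropy produces, at the level of finite families of tuples of subsets, independence data rich enough to manufacture the uncountable scrambled set required by Li-Yorke chaos. The strategy follows the template established for amenable and sofic groups in \cite{KL07, KL13}, but adapted to the ``finite families of tuples'' formalism of Definition~\ref{D-indep density}, since for general groups one cannot localize positive density to a single tuple of points (Proposition~\ref{P-positive naive no orbit IE}).

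First I would invoke Theorem~\ref{T-positive naive} to convert the hypothesis of positive naive entropy into a combinatorial statement: there is a finite family of tuples of subsets of $X$ with positive independence density. Because the independence density is positive, arbitrarily large independence sets exist, and by a pigeonhole/Ramsey-type averaging argument one extracts a single pair $(A_0, A_1)$ of disjoint closed subsets of $X$ (with positive distance $\rho(A_0,A_1)>0$) possessing independence sets of positive density along the group. The separation $\rho(A_0,A_1)>0$ will guarantee the $\limsup$ condition, while the independence (the ability to realize \emph{every} pattern in $\{0,1\}^J$ over large index sets $J$) will be used to arrange the $\liminf = 0$ condition.

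Next I would build the scrambled set. For each $x$ in a Cantor set $\{0,1\}^{\Nb}$, I want a point $y_x \in X$ whose orbit visits $A_{\omega(s)}$ for a prescribed pattern $\omega$ along the independence set, with the patterns chosen so that any two distinct parameters force both the recurrence (common excursions into a small diagonal neighborhood, giving $\liminf \rho = 0$) and the separation (infinitely many times where one orbit sits in $A_0$ and the other in $A_1$, giving $\limsup \rho \geq \rho(A_0,A_1) > 0$). Concretely, the independence set is partitioned into infinitely many infinite blocks; on each block one either forces the two points to agree (driving $\liminf$ to $0$ by also demanding agreement on a shrinking diagonal) or to disagree (driving $\limsup$ up). A standard construction assigns to each element of the Cantor set a consistent choice of patterns, and compactness of $X$ yields the actual points $y_x$ as limits; the uncountability of $\{0,1\}^{\Nb}$ gives the uncountable scrambled set $Y$.

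The main obstacle, and the reason the finite-families formalism is essential, is the passage from \emph{families} of tuples of subsets back to a \emph{single} usable pair $(A_0,A_1)$ with genuinely positive-density independence: one must show that positive independence density for a finite family forces one particular tuple (in fact a pair) in the family to carry large independence sets on its own, rather than the largeness being an artifact spread across different members of the family. I expect this reduction to require a careful counting argument exploiting that the family is finite, so that some fixed member is ``responsible'' for a definite fraction of the independence on infinitely many large index sets. Once that pair is isolated with a quantitative density bound, the combinatorial-to-dynamical construction of the scrambled set proceeds along the now-classical lines, with compactness supplying the limit points and the density ensuring the index sets are large enough to encode the full Cantor set of patterns.
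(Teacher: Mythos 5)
Your first step (invoking Theorem~\ref{T-positive naive}) matches the paper, but the pivotal reduction you propose next --- extracting from the finite family a \emph{single} pair $(A_0,A_1)$ with positive independence density, by a pigeonhole or averaging argument --- is exactly what this paper proves to be impossible in general. The last assertion of Proposition~\ref{P-positive naive no orbit IE} exhibits (for $\Gamma=\Fb_2$) a finite pairwise disjoint closed $\cA\subseteq\sA_2$ with positive independence density such that \emph{no} element of $\cA$ has positive independence density; relatedly, the action there has positive naive entropy but no non-diagonal orbit IE-pairs. So the ``careful counting argument'' you anticipate cannot exist: the independence really can be spread across the members of the family, with different members responsible on different index sets $F$. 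A pigeonhole over a fixed sequence of witnessing sets $F_n$ only yields one member working along a subsequence, which is strictly weaker than positive independence density (a condition quantified over \emph{all} $F\in\cF(\Gamma)$) and does not suffice to run the doubling step of the subsequent construction.

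The paper's proof circumvents this obstacle by never collapsing to a single tuple until the very end. Theorem~\ref{T-chaos} runs the entire Cantor-set construction at the level of finite families: Lemma~\ref{L-double} (and its iterate, Lemma~\ref{L-double double}) shows that a finite family with positive independence density yields another finite family, still with positive independence density, whose members are tuples of the form ${\mathbf A}^E$ for $K$-separated $E$; the induction builds families $\cA_m$ with connecting maps $\pi_m:\cA_m\to\cA_{m-1}$, and a single coherent thread $({\mathbf A}_m)_m$ is selected only afterwards from the nonempty inverse limit, producing the Cantor set $Z$. Your description of the final scrambled-set construction (prescribed patterns on blocks of an independence set, compactness supplying limit points, separation of disjoint closed sets giving the $\limsup$ condition) is in the right spirit and close to how Theorem~\ref{T-chaos} concludes, but as written your argument rests on the unavailable single-pair reduction, so it has a genuine gap at its central step.
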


For sofic groups, in fact Theorem~\ref{T-Li Yorke} is stronger than  \cite[Corollary 8.4]{KL13} since there are actions with zero sofic entropy but positive naive entropy (see the discussion at the end of Section~\ref{S-example}).

For any $\Gamma$-invariant Borel probability measure $\mu$ on $X$, Bowen introduced the {\it naive entropy} \cite[Definition 7]{Bowen17} \cite[Definition 2.2]{Burton} of the measure-preserving action $\Gamma\curvearrowright (X, \mu)$ by
$$h^{\rm nv}_\mu(\Gamma\curvearrowright X):=\sup_{\cP}\inf_F\frac{1}{|F|}H_\mu(\cP^F),$$
where $\cP$ ranges over finite Borel partitions of $X$ and $F$ ranges over nonempty finite subsets of $\Gamma$. Here for a finite Borel partition $\cQ$ of $X$, $H_\mu(\cQ)$ denotes
the Shannon entropy $-\sum_{q\in \cQ}\mu(q)\log \mu(q)$. It is easy to check that when $\Gamma$ is amenable, $h^{\rm nv}_\mu(\Gamma\curvearrowright X)$ coincides with the classical Kolmogorov-Sinai entropy \cite[Theorem 4.2]{DFR} \cite[page 198]{KL16}. When $\Gamma$ is nonamenable, $h^{\rm nv}_\mu(\Gamma\curvearrowright X)$  is either $0$ or $+\infty$ \cite[Theorem 2.13]{Bowen17}. Burton showed that one always has $h^{\rm nv}_\mu(\Gamma\curvearrowright X)\le h^{\rm nv}(\Gamma\curvearrowright X)$ \cite[Theorem 1.3]{Burton}.

The action $\Gamma\curvearrowright X$ is called {\it distal} \cite{Auslander} if for any distinct $x,y\in X$ one has $\inf_{s\in \Gamma}\rho(sx, sy)>0$, where $\rho$ is any given compatible metric on $X$. Parry showed first that distal actions of $\Zb$ have zero entropy \cite{Parry}. Since distal actions cannot be Li-Yorke chaotic, it was observed in \cite[Corollary 8.5]{KL13} that distal actions of sofic groups have sofic entropy either $-\infty$ or $0$. Via reduction to actions of $\Zb$, Burton \cite[Example 2.2]{Burton} showed that if $\Gamma$ contains an element with infinite order, then any distal action of $\Gamma$ has zero naive entropy. From Theorem~\ref{T-Li Yorke} and the above paragraph we conclude that this holds for all groups, which answers a question of Bowen \cite[Question 8]{Bowen17}.

\begin{corollary} \label{C-distal}
For any countably infinite group $\Gamma$, any distal continuous action of $\Gamma$ on a compact metrizable space $X$ has zero naive topological entropy. If $\mu$ is a $\Gamma$-invariant Borel probability measure on $X$, then the action $\Gamma\curvearrowright (X, \mu)$ also has zero naive entropy.
\end{corollary}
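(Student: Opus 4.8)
The plan is to deduce the corollary directly from Theorem~\ref{T-Li Yorke}, combined with two elementary observations: that a distal action admits no Li--Yorke pair at all, and that both versions of naive entropy are manifestly non-negative, so that ``not positive'' is the same as ``zero''.

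First I would treat the topological statement. Fix a compatible metric $\rho$ on $X$ and suppose the action is distal. Then for any distinct $x,y\in X$ we have
$$\liminf_{\Gamma\ni s\to\infty}\rho(sx,sy)\ \geq\ \inf_{s\in\Gamma}\rho(sx,sy)\ >\ 0,$$
so the condition $\liminf_{\Gamma\ni s\to\infty}\rho(sx,sy)=0$ demanded of a Li--Yorke pair can never be met. Consequently there is not even a two-point scrambled set, and in particular the action is not Li--Yorke chaotic. By the contrapositive of Theorem~\ref{T-Li Yorke}, such an action cannot have positive naive topological entropy. Since $N(\cU^F)\geq 1$ forces $\hnv(\Gamma,\cU)\geq 0$ for every finite open cover $\cU$, and hence $\hnv(\Gamma\curvearrowright X)\geq 0$, the only remaining possibility is $\hnv(\Gamma\curvearrowright X)=0$.

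For the measure-theoretic statement I would invoke Burton's inequality $h^{\rm nv}_\mu(\Gamma\curvearrowright X)\le \hnv(\Gamma\curvearrowright X)$ quoted in the introduction. Its right-hand side is $0$ by the previous paragraph, while its left-hand side is non-negative because each Shannon entropy $H_\mu(\cP^F)$ is non-negative. Therefore $h^{\rm nv}_\mu(\Gamma\curvearrowright X)=0$ as well.

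I do not expect any genuine obstacle here: once Theorem~\ref{T-Li Yorke} is in hand the corollary is a formal consequence, the only points worth recording being the sign conventions that make non-positivity equivalent to vanishing. The entire difficulty has been front-loaded into Theorem~\ref{T-Li Yorke}, equivalently into the independence-density machinery of Definition~\ref{D-indep density} and Theorem~\ref{T-positive naive}; the distal case is then obtained for free, for all countably infinite $\Gamma$ and without the reduction to $\Zb$-actions used in \cite{Burton}.
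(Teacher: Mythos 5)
Your proposal is correct and matches the paper's own argument exactly: the paper likewise notes that distal actions admit no Li--Yorke pairs (since $\inf_{s\in\Gamma}\rho(sx,sy)>0$ precludes the $\liminf$ condition), deduces zero naive topological entropy from Theorem~\ref{T-Li Yorke}, and obtains the measure-theoretic statement from Burton's inequality $h^{\rm nv}_\mu(\Gamma\curvearrowright X)\le \hnv(\Gamma\curvearrowright X)$. Nothing is missing.
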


The notion of tame actions was introduced by K\"{o}hler \cite{Kohler} motivated by Rosenthal's characterization of Banach spaces containing $\ell^1$ \cite{Rosenthal}, and is well studied \cite{Aujogue, CS, Glasner06, Glasner07, Glasner18, GM06, GM12, GM14, GM18a, GM18, GM19, GMU, Huang, Ibarlucia, KL07, KL16, LR, Romanov}. Denote by $C(X)$ the space of all continuous $\Rb$-valued functions on $X$ equipped with the supremum norm. The action $\Gamma\curvearrowright X$ is said to be {\it untame} if there are some $f\in C(X)$ and some infinite subset $H$ of $\Gamma$ such that the map $\delta_s\mapsto sf$ for $s\in H$ extends to a linear Banach space isomorphism from $\ell^1(H)$ to the closed linear span of $sf$ for $s\in H$ in $C(X)$. Tameness can also be characterised in terms of the Ellis semigroup of $\Gamma\curvearrowright X$. Using combinatorial independence it was shown that positive entropy actions are untame for amenable groups \cite{KL07} and sofic groups \cite{KL13}. Here we extend it to all groups in the context of naive entropy.

\begin{theorem} \label{T-naive to tame}
For any countably infinite group $\Gamma$, tame continuous actions on compact metrizable spaces have zero naive entropy.
\end{theorem}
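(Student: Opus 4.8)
The plan is to prove the contrapositive: if $\Gamma\curvearrowright X$ has positive naive entropy, then it is untame. The starting point is Theorem~\ref{T-positive naive}, which converts positive naive entropy into a finite family of tuples of subsets of $X$ having positive independence density. Passing to two coordinates of each tuple only enlarges the collection of independence sets (any $\{0,1\}$-coloring of a subset is a special coloring into $\{1,\dots,k\}$, so an independence set for the tuple is one for the sub-pair), and after shrinking slightly I may assume the family is a finite collection $\{(A_{j,0},A_{j,1})\}_{j=1}^{n}$ of pairs of disjoint closed sets, still of positive independence density. The real goal is to manufacture, for one of these pairs, an \emph{infinite} independence set $H\subseteq\Gamma$; once this is available the conclusion is routine. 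Indeed, choosing $f\in C(X)$ with $f\equiv 0$ on $A_{j,0}$ and $f\equiv 1$ on $A_{j,1}$ by Urysohn's lemma, and using that every $\{0,1\}$-pattern on any finite subset of $H$ is realized by some point of $X$, one gets for finitely supported real $(c_s)_{s\in H}$ the estimate $\bigl\|\sum_s c_s\,sf\bigr\|_\infty\ge\tfrac12\sum_s|c_s|$ (take the coloring matching the signs of the $c_s$), while trivially $\bigl\|\sum_s c_s\,sf\bigr\|_\infty\le\sum_s|c_s|$; hence $\delta_s\mapsto sf$ extends to a Banach space isomorphism $\ell^1(H)\to\overline{\spn}\{sf:s\in H\}$ and the action is untame.

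The whole difficulty is therefore concentrated in producing the infinite independence set, and here one must resist an incorrect shortcut. Positive naive entropy only delivers, for \emph{every} finite $F\subseteq\Gamma$, an independence subset of $F$ of cardinality proportional to $|F|$ (after a Sauer--Shelah/Karpovsky--Milman step), and a pigeonhole over the finite family yields a single pair with finite independence sets of every size. This is only the nonnullness level of information, and it does \emph{not} suffice: a pair can have arbitrarily large finite independence sets while having no infinite one. The essential point is that the family structure, combined with the uniform ``for all $F$'' strength built into the naive-entropy hypothesis, must be used, rather than a single tuple read along a single exhaustion of $\Gamma$.

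The step I expect to be the main obstacle is exactly this finite-to-infinite passage, and the plan is to route it through Rosenthal's $\ell^1$ theorem and the Bourgain--Fremlin--Talagrand description of tame systems. Set $f_j\in C(X)$ separating $A_{j,0}$ from $A_{j,1}$ and consider the finitely many orbits $\{sf_j:s\in\Gamma\}\subseteq C(X)$. If the action were tame, then by Rosenthal's dichotomy each orbit would be weakly precompact, i.e.\ would contain \emph{no} $\ell^1$-subsequence, equivalently no pair $(A_{j,0},A_{j,1})$ would admit an infinite independence set. I would then derive a contradiction with positivity of the family density: assuming tameness, I would pass to a limit of the witnessing independence structure along an ultrafilter on $\Gamma$ and argue, applying the dichotomy simultaneously to the finitely many orbits, that this limit produces an infinite independence set for one of the pairs. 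The crux---where the family hypothesis genuinely outperforms single-tuple nonnullness---is to verify that what survives the limit is an honest infinite independence set rather than merely a sequence of unrelated finite ones, and that the uniform density over all finite $F$ is precisely what guarantees this. Once the infinite independence set is secured, the $\ell^1$ construction of the first paragraph finishes the proof.
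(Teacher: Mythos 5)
Your overall architecture coincides with the paper's: use Theorem~\ref{T-positive naive} to convert positive naive entropy into a finite pairwise disjoint closed family in $\sA_2$ with positive independence density, produce an infinite independence set for some member of the family, and then conclude untameness via the Urysohn/$\ell^1$ computation (that last equivalence is \cite[Proposition 8.14]{KL16}, and your sketch of it is correct). The difficulty is that essentially all of the content of the theorem is concentrated in the middle step --- the passage from positive density to an \emph{infinite} independence set --- and you have not proved it; you have only named it and, as you yourself say, identified it as ``the crux.'' In the paper this step is Theorem~\ref{T-IE to tame}, obtained by adapting \cite[Theorem 7.1]{KL13} from a single tuple to a finite family.

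The specific mechanism you propose does not close the gap. An ultrafilter limit of the witnessing sets $J_F\subseteq F$ need not be infinite, or even nonempty: positive density guarantees $|J_F|\ge q|F|$ for each finite $F$, but gives no single element of $\Gamma$ lying in $J_F$ for ultrafilter-many $F$, so the ``limit set'' collapses. Moreover, Rosenthal's dichotomy is of no help at this point: under the tameness hypothesis it tells you precisely that no pair admits an infinite independence set, which is the statement you are trying to contradict, so invoking it is circular unless you independently show that positive density forces an infinite independence set --- which is again Theorem~\ref{T-IE to tame}. The actual proof of that result in \cite{KL13} is not a soft compactness argument; it is a recursive extraction that adjoins one group element at a time to a growing independence set while retaining a quantitative remnant of the density at each stage. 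Without carrying out that construction (and checking that it survives the pigeonholing over the finite family), the proof is incomplete.
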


This paper is organized as follows. We introduce the independence density for finite families of subsets in Section~\ref{S-independence density}, and show that positive independence density characterizes positive naive entropy. Theorems~\ref{T-Li Yorke} and \ref{T-naive to tame} are proved in Sections~\ref{S-chaos} and \ref{S-tame} respectively.
In Section~\ref{S-example} we exhibit an action with positive naive entropy but no non-diagonal orbit IE-pairs. This example shows that in general one cannot localize positive density from tuples of subsets to a tuple of points.

Throughout this article, $\Gamma$ will be a countably infinite discrete group with identity element $e_\Gamma$, and we fix a continuous action of $\Gamma$ on a compact metrizable space $X$. For any set $H$ we denote by $\cF(H)$ the set of nonempty finite subsets of $H$.  For each $n\in \Nb$ we write $[n]$ for $\{1, \dots, n\}$.

\noindent{\it Acknowledgments.}
H. L. is partially supported by NSF and NSFC grants. We are grateful to Lewis Bowen for comments.

\section{Independence Density for Families of Tuples} \label{S-independence density}

In this section we introduce the independence density for finite families of subsets and prove Theorem~\ref{T-positive naive}.

For each $k\in \Nb$, denote by $\sA_k$ the space of all $k$-tuples of subsets of $X$. Set $\sA=\bigcup_{k\in \Nb}\sA_k$, and $\sA_{\ge m}=\bigcup_{k\ge m}\sA_k$.

Recall the notion of independence sets introduced in \cite[Definition 2.1]{KL07} (see also \cite[Definition 8.7]{KL16}).

\begin{definition} \label{D-indep set}
For any $\mathbf A=(A_1, \dots, A_k)\in \sA$, we say $J\subseteq \Gamma$ is an {\it independence set} for $\mathbf A$ if for any nonempty finite set $F\subseteq J$ and any map $\omega: F\rightarrow [k]$ one has $\bigcap_{s\in F}s^{-1}A_{\omega(s)}\neq \emptyset$.
\end{definition}

\begin{definition} \label{D-indep density}
For any  finite $\cA\subseteq \sA$, we define the {\it independence density} of $\cA$ to be the largest $q\ge 0$ such that for every $F\in \cF(\Gamma)$ there are some $J\subseteq F$ with $|J|\ge q|F|$ and some ${\mathbf A}\in \cA$ so that $J$ is an independence set for ${\mathbf A}$.
\end{definition}

When $\cA$ consists of a single tuple, Definition~\ref{D-indep density} reduces to \cite[Definition 3.1]{KL13}.

We say that ${\mathbf A}=(A_1, \dots, A_k)\in \sA_k$ is {\it pairwise disjoint} ({\it closed} resp.) if the sets $A_1, \cdots, A_k$ are pairwise disjoint (closed resp.). We say that $\cA\subseteq \sA$ is {\it pairwise disjoint} ({\it closed} resp.) if each ${\mathbf A}\in \cA$ is pairwise disjoint (closed resp.).

For covers $\cU$ and $\cV$ of $X$, we denote by $\cU\vee \cV$ the cover of $X$ consisting of $U\cap V$ for $U\in \cU$ and $V\in \cV$.
We say that $\cU$ {\it is finer than} $\cV$ if every item of $\cU$ is contained in some item of $\cV$.
The following lemma is well known, see for example the proofs of \cite[Proposition 1]{Blanchard93} or \cite[Lemma 12.11]{KL16}.

\begin{lemma} \label{L-two cover}
For any finite open cover $\cU$ of $X$, there are $n\in \Nb$ and two-element open covers $\cU_1, \dots, \cU_n$ of $X$ such that $\bigvee_{j=1}^n\cU_j$ is finer than $\cU$.
\end{lemma}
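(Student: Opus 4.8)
The plan is to realize $\cU$ as coarser than a finite join of two-element covers by producing exactly one two-element cover for each member of $\cU$. Write $\cU=\{U_1,\dots,U_m\}$ and set $n=m$. The guiding idea is that a two-element cover of the form $\{U_i, W_i\}$ records the binary alternative ``a given point does or does not lie in $U_i$,'' and that intersecting these alternatives over all $i$ should pin any point down into a single $U_i$. The technical device making this precise is a shrinking of $\cU$, so that the ``complementary'' piece of each two-element cover can be taken open.

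Since $X$ is compact metrizable, hence normal, the shrinking lemma applies: I would produce open sets $V_1,\dots,V_m$ with $\overline{V_i}\subseteq U_i$ for each $i$ and $\bigcup_{i=1}^m V_i=X$. One obtains these one index at a time, since the closed set $X\setminus\bigcup_{j\ne i}U_j$ is contained in $U_i$, so normality yields an open $V_i$ squeezed strictly between them, and replacing $U_i$ by $V_i$ leaves a cover. For each $i$ I then set $\cU_i=\{U_i,\ X\setminus\overline{V_i}\}$. Both members are open, and because $\overline{V_i}\subseteq U_i$ one has $U_i\cup(X\setminus\overline{V_i})\supseteq U_i\cup(X\setminus U_i)=X$, so each $\cU_i$ is a genuine two-element open cover of $X$.

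It then remains to check that $\bigvee_{i=1}^m\cU_i$ is finer than $\cU$. A typical member of this join has the form $W=\bigcap_{i=1}^m W_i$ with $W_i\in\{U_i,\ X\setminus\overline{V_i}\}$. If $W_i=U_i$ for at least one index $i$, then $W\subseteq U_i$ and we are done. Otherwise $W_i=X\setminus\overline{V_i}$ for every $i$, whence $W=X\setminus\bigcup_{i=1}^m\overline{V_i}\subseteq X\setminus\bigcup_{i=1}^m V_i=\emptyset$, and the empty set is contained in any member of $\cU$. Hence every item of the join lies in some item of $\cU$, as required.

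The entire content of the argument is concentrated in the shrinking lemma; once the $V_i$ are in hand, the verification that the join refines $\cU$ is exactly the elementary case split above, with no estimate required. I therefore do not anticipate a genuine obstacle, only the mild but essential point that one must pass from the open cover $\cU$ to a closed shrinking, so that the complements $X\setminus\overline{V_i}$ are open and can serve as the second member of each two-element cover.
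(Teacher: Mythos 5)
Your proof is correct and is essentially the standard argument that the paper points to in its references (the shrinking lemma for finite open covers of a normal space, followed by the two-element covers $\{U_i,\,X\setminus\overline{V_i}\}$ and the observation that the ``all-complements'' member of the join is empty). No gaps; the degenerate case where some $X\setminus\overline{V_i}$ is empty is harmless and is in any case handled separately where the lemma is applied.
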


Let $k\ge 2$ and let $Z$ be a nonempty finite set. Let $\sW$ be the cover of $\{0, 1, \dots, k\}^Z=\prod_{z\in Z}\{0, 1, \dots, k\}$ consisting of subsets of the form $\prod_{z\in Z}(\{0, 1, \dots, k\}\setminus \{i_z\})$, where $i_z\in [k]$ for each $z\in Z$. For a set $S\subseteq \{0, 1, \dots, k\}^Z$ we write $N_S$ for the minimal number of sets in $\sW$ needed to cover $S$. The following is the major combinatorial fact we need \cite[Lemma 3.3]{KL07} \cite[Lemma 12.13]{KL16}.

\begin{lemma} \label{L-comb1}
Let $k\ge 2$ and $b>0$. There exists $c>0$ depending only on $k$ and $b$ such that for any finite set $Z$ and $S\subseteq \{0,1, \dots, k\}^Z$ with $N_S\ge k^{b|Z|}$ there is a $J\subseteq Z$ with $|J|\ge c|Z|$ and $S|_J\supseteq [k]^J$.
\end{lemma}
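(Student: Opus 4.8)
The plan is to argue by contraposition: I will show that for a suitable $c=c(k,b)>0$, if $S$ has no subset $J\subseteq Z$ with $|J|\ge c|Z|$ and $S|_J\supseteq[k]^J$, then $N_S<k^{b|Z|}$. The engine is a Sauer--Shelah type bound for the covering number itself. For $T\subseteq\{0,\dots,k\}^W$ (any finite $W$) write $\sigma(T)=\bigl|\{J\subseteq W:\ T|_J\supseteq[k]^J\}\bigr|$ for the number of subsets $T$ shatters \emph{with nonzero values}. The key inequality is
\[
N_S\ \le\ \sigma(S)\qquad(\star).
\]
Granting $(\star)$, the lemma is a counting estimate: if every shattered $J$ has $|J|<c|Z|$, then $\sigma(S)\le\sum_{j<c|Z|}\binom{|Z|}{j}\le 2^{H(c)|Z|}$, where $H$ is the binary entropy function. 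Since $b\log_2 k>0$ and $H(c)\to0$ as $c\to0^+$, I fix $c\in(0,\tfrac12)$ with $H(c)<b\log_2 k$, whence $N_S\le 2^{H(c)|Z|}<k^{b|Z|}$, contradicting $N_S\ge k^{b|Z|}$. Note that the symbol $0$ is never a forbidden value of a member of $\sW$, so it acts as a wildcard; this is exactly what prevents a direct appeal to the classical Sauer--Shelah lemma (a private point of a box may "escape'' using $0$), and forces me to prove $(\star)$ by hand.

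I would prove $(\star)$ by induction on $|Z|$, peeling off one coordinate $z_0$. For $u\in\{0,\dots,k\}$ let $S^{(u)}=\{y:(y,u)\in S\}\subseteq\{0,\dots,k\}^{Z\setminus\{z_0\}}$ be the fibers, and set $S'=\bigcup_u S^{(u)}$ (the projection) and $S^*=\bigcap_{u\in[k]}S^{(u)}$ (the fibers that meet every nonzero value). Splitting the shattered family of $S$ according to whether it contains $z_0$ gives the superadditivity
\[
\sigma(S)\ \ge\ \sigma(S')+\sigma(S^*)\qquad(\dagger),
\]
since a set avoiding $z_0$ is shattered by $S$ iff it is shattered by $S'$, while $J'\cup\{z_0\}$ is shattered by $S$ whenever $J'$ is shattered by $S^*$ (the nonzero fibers of $S^*$ supply the required value in the $z_0$-direction); crucially the intersection defining $S^*$ runs over $[k]$ only, which is where the wildcard $0$ is correctly discounted. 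By the inductive hypothesis $\sigma(S')\ge N_{S'}$ and $\sigma(S^*)\ge N_{S^*}$, so in view of $(\dagger)$ the inequality $(\star)$ reduces to the \emph{covering recursion}
\[
N_S\ \le\ N_{S'}+N_{S^*}\qquad(\mathrm R).
\]

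\textbf{The main obstacle is $(\mathrm R)$.} Cardinalities split cleanly over fibers --- this is Pajor's proof of Sauer--Shelah --- but covering numbers do not, and the naive construction fails. If one takes a minimal cover of $S'$ and extends each box by a single forbidden value at $z_0$, then over every box that already sees \emph{all} of $[k]$ in the $z_0$-direction there remains a residual set of points, and these residuals need not project into $S^*$; simply correcting one fiber only yields the weaker $N_S\le N_{S'}+\min_{u\in[k]}N_{S^{(u)}}$, which points the wrong way since $N_{S^*}\le N_{S^{(u)}}$. The point to get right is that, by choosing the forbidden $z_0$-value \emph{per box} in coordination with an optimal cover of $S^*$, these residuals are absorbed at total extra cost $N_{S^*}$; equivalently, an optimal cover of $S$ never costs more than covering the projection plus the fully shattered fibers. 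Small cases show $(\mathrm R)$ to be tight, so no slack is available and the box-by-box bookkeeping must be done carefully; this is the delicate combinatorial step. Once $(\mathrm R)$ is established, the induction closes (the base case $Z=\varnothing$ being immediate), $(\star)$ holds for all $k\ge2$, and the counting estimate above completes the proof.
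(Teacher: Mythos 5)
The paper does not actually prove Lemma~\ref{L-comb1}; it quotes it from \cite{KL07} (Lemma 3.3) and \cite{KL16} (Lemma 12.13), so the comparison below is with the cited source rather than with an in-paper argument. Your skeleton --- contrapositive, the intermediate inequality $(\star)$ bounding $N_S$ by the number of subsets $J$ with $S|_J\supseteq[k]^J$, and the binomial/entropy count choosing $c$ with $H(c)<b\log_2 k$ --- is the right route, and $(\star)$ is essentially the auxiliary counting lemma that Kerr and Li establish. But the step you yourself flag as the crux, the covering recursion $(\mathrm R)$: $N_S\le N_{S'}+N_{S^*}$, is \emph{false}, and you never prove it; you only assert that the residuals can be ``absorbed at total extra cost $N_{S^*}$.'' Counterexample with $k=2$ and $Z=\{z_0,z_1\}$: let $S=\{x,x'\}$ with $x_{z_1}=0$, $x_{z_0}=1$ and $x'_{z_1}=1$, $x'_{z_0}=2$. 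A member of $\sW$ contains $x$ only if it excludes $2$ at $z_0$, and contains $x'$ only if it excludes $1$ at $z_0$, so no single box contains both and $N_S=2$. Yet the projection $S'=\{0,1\}\subseteq\{0,1,2\}^{\{z_1\}}$ is covered by the single box $\{0,1\}$, so $N_{S'}=1$, while $S^*=S^{(1)}\cap S^{(2)}=\{0\}\cap\{1\}=\emptyset$, so $N_{S^*}=0$; hence $N_S=2>1=N_{S'}+N_{S^*}$.

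The structural reason no amount of box-by-box bookkeeping can save $(\mathrm R)$ is already visible in your inequality $(\dagger)$: a set $J'\cup\{z_0\}$ is shattered by $S$ precisely when $J'$ is shattered \emph{simultaneously by each} fiber $S^{(u)}$, $u\in[k]$, and this is strictly weaker than $J'$ being shattered by the intersection $S^*$. In the example above, $J'=\emptyset$ is shattered by both nonempty fibers $S^{(1)}$ and $S^{(2)}$ (so $\{z_0\}$ is shattered by $S$, and indeed $\sigma(S)=2=N_S$, consistent with $(\star)$), but it is not shattered by $S^*=\emptyset$. So the true budget for the shattered sets containing $z_0$ is the number of $J'$ shattered by all fibers at once, not $\sigma(S^*)$, and to run an induction against that quantity one must strengthen the inductive hypothesis to a statement about finite \emph{families} of subsets rather than a single subset --- which is exactly the extra strength carried by the Karpovsky--Milman-type arguments used in \cite{KL07,KL16}. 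As written, your derivation of the lemma from $(\star)$ is fine, but the proof of $(\star)$ collapses at $(\mathrm R)$ and must be replaced by such a multi-family induction or by a direct appeal to the counting lemma of \cite{KL07}.
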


The following theorem characterizes positive naive entropy in terms of finite pairwise disjoint closed families with positive independence density.

\begin{theorem} \label{T-positive naive}
The following are equivalent:
\begin{enumerate}
\item $\hnv(\Gamma\curvearrowright X)>0$,
\item  there is a finite pairwise disjoint closed $\cA\subseteq \sA_2$ with positive independence density,
\item there is a finite pairwise disjoint closed $\cA\subseteq \sA_{\ge 2}$ with positive independence density.
\end{enumerate}
\end{theorem}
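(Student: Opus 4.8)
\emph{Strategy.} The plan is to establish the cycle $(1)\Rightarrow(2)\Rightarrow(3)\Rightarrow(1)$. The implication $(2)\Rightarrow(3)$ is free, since $\sA_2\subseteq\sA_{\ge 2}$ means any family witnessing (2) already witnesses (3). So the work lies in the main implication $(1)\Rightarrow(2)$, which uses both combinatorial lemmas, and in $(3)\Rightarrow(1)$, which is an elementary covering estimate. For $(1)\Rightarrow(2)$ I would first pick a finite open cover $\cU$ with $\hnv(\Gamma,\cU)=\alpha>0$, so that $N(\cU^F)\ge e^{\alpha|F|}$ for every $F\in\cF(\Gamma)$. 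By Lemma~\ref{L-two cover} there are two-element open covers $\cU_1,\dots,\cU_n$ with $\cV:=\bigvee_{j=1}^n\cU_j$ finer than $\cU$. Since refinement can only increase covering numbers and $\cV^F=\bigvee_{j=1}^n(\cU_j)^F$, I obtain
\[
\prod_{j=1}^n N((\cU_j)^F)\ \ge\ N(\cV^F)\ \ge\ N(\cU^F)\ \ge\ e^{\alpha|F|},
\]
so for each $F$ there is an index $j=j(F)\in[n]$ with $N((\cU_j)^F)\ge e^{\alpha|F|/n}$. The whole point of passing to a \emph{family} is that this index is allowed to depend on $F$, while the density constant produced below will not.

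\emph{Combinatorial core of $(1)\Rightarrow(2)$.} Fixing such a two-element cover $\cU_j=\{U_0,U_1\}$, I would form the disjoint closed sets $A_0=X\setminus U_1$ and $A_1=X\setminus U_0$, disjoint because $U_0\cup U_1=X$. The key is a translation into Lemma~\ref{L-comb1} with $Z=F$ and $k=2$: assign to each $x\in X$ the point $\sigma_x\in\{0,1,2\}^F$ with $\sigma_x(s)=1$ if $sx\in A_0$, $\sigma_x(s)=2$ if $sx\in A_1$, and $\sigma_x(s)=0$ if $sx\in U_0\cap U_1$, and set $S=\{\sigma_x:x\in X\}$. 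Forbidding the value $1$ (resp.\ $2$) at a coordinate in a member of $\sW$ corresponds exactly to requiring $sx\in U_1$ (resp.\ $sx\in U_0$), so the members of $\sW$ biject with the members of $(\cU_j)^F$ and a subfamily covers $S$ iff the matching members cover $X$; hence $N_S=N((\cU_j)^F)\ge e^{\alpha|F|/n}=2^{b|F|}$ with $b=\alpha/(n\log 2)$. Lemma~\ref{L-comb1} then yields a constant $c=c(2,b)>0$, \emph{independent of} $F$, and a set $J\subseteq F$ with $|J|\ge c|F|$ and $S|_J\supseteq[2]^J$. Unwinding $\sigma_x$, the latter says that for every $\omega\colon J\to\{0,1\}$ there is $x$ with $sx\in A_{\omega(s)}$ for all $s\in J$, i.e.\ $J$ is an independence set for the pair $(A_0,A_1)$. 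Taking $\cA=\{(X\setminus U_{j,1},\,X\setminus U_{j,0}):j\in[n]\}\subseteq\sA_2$, which is finite, pairwise disjoint and closed, I conclude that $\cA$ has independence density at least $c>0$.

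\emph{The implication $(3)\Rightarrow(1)$.} Let $\cA=\{\mathbf A^{(1)},\dots,\mathbf A^{(m)}\}$ be finite, pairwise disjoint, closed, of positive independence density $q$, with $\mathbf A^{(l)}=(A^{(l)}_1,\dots,A^{(l)}_{k_l})$ and each $k_l\ge 2$. For each $l$ I would use normality to pick disjoint open $O^{(l)}_i\supseteq A^{(l)}_i$, let $\cU^{(l)}=\{O^{(l)}_1,\dots,O^{(l)}_{k_l},\,X\setminus\bigcup_i A^{(l)}_i\}$ (a finite open cover), and set $\cU=\bigvee_{l}\cU^{(l)}$. Given $F$, choose $J\subseteq F$ with $|J|\ge q|F|$ and $\mathbf A^{(l)}$ for which $J$ is an independence set, and for each $\omega\colon J\to[k_l]$ a point $x_\omega\in\bigcap_{s\in J}s^{-1}A^{(l)}_{\omega(s)}$; disjointness of the $A^{(l)}_i$ makes these $k_l^{|J|}$ points distinct. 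Since $sx_\omega\in A^{(l)}_{\omega(s)}$ lies in $O^{(l)}_{\omega(s)}$ and in no other member of $\cU^{(l)}$, a member of $(\cU^{(l)})^J$ can contain $x_\omega$ only if it equals $\bigcap_{s\in J}s^{-1}O^{(l)}_{\omega(s)}$, so each such member contains at most one $x_\omega$; hence $N(\cU^J)\ge N((\cU^{(l)})^J)\ge k_l^{|J|}\ge 2^{q|F|}$. As $\cU^F$ refines $\cU^J$, this gives $N(\cU^F)\ge 2^{q|F|}$ for all $F$, whence $\hnv(\Gamma,\cU)\ge q\log 2>0$.

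\emph{Main obstacle.} I expect the delicate point to be the combinatorial setup in $(1)\Rightarrow(2)$: reducing an arbitrary cover to two-element covers so that a \emph{single} density constant $c$ survives the pigeonhole over the finitely many $j$, and arranging the identification $N_S=N((\cU_j)^F)$ so that Lemma~\ref{L-comb1} delivers independence directly for the disjoint \emph{closed} sets $A_0,A_1$ with no separate approximation step. Getting the bookkeeping of this correspondence exactly right---so that the value $0$ acts as the inert coordinate of $\sW$ while $1,2$ encode the two closed sets---is where the care will be needed; the reverse direction $(3)\Rightarrow(1)$ is comparatively routine.
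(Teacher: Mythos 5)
Your proposal is correct and follows essentially the same route as the paper: the reduction to two-element covers via Lemma~\ref{L-two cover}, the pigeonhole over the $n$ covers with an $F$-dependent index but $F$-independent constant from Lemma~\ref{L-comb1}, and the covering-number count for $(3)\Rightarrow(1)$ all match. The only cosmetic difference is in $(3)\Rightarrow(1)$, where you invoke normality to choose disjoint open neighborhoods of the $A^{(l)}_i$, while the paper uses the cover by the open sets $A_{j,i}\cup V_j=X\setminus\bigcup_{i'\neq i}A_{j,i'}$ directly; both yield the same estimate.
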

\begin{proof} (1)$\Rightarrow$(2). Suppose that $\hnv(\Gamma\curvearrowright X)>0$. Then $\hnv(\Gamma, \cU)>0$ for some finite open cover $\cU$ of $X$. By Lemma~\ref{L-two cover} we can find two-element open covers $\cU_1, \dots, \cU_n$ of $X$ such that $\bigvee_{j=1}^n\cU_j$ is finer than $\cU$. We may assume that none of $\cU_j$ contains $X$. For each $1\le j\le n$, write $\cU_j$ as $\{U_{j, 1}, U_{j, 2}\}$ and
set ${\mathbf A}_j=(X\setminus U_{j, 1}, X\setminus U_{j, 2})\in \sA_2$. Then $\cA=\{{\mathbf A}_1, \dots, {\mathbf A}_n\}\subseteq \sA_2$ is finite pairwise disjoint and closed.
We claim that $\cA$ has positive independence density. Set $b:=\hnv(\Gamma, \cU)/(n\log 2)>0$. Then we have the constant $c>0$ in Lemma~\ref{L-comb1} depending only on $k=2$ and $b$. Let $F\in \cF(\Gamma)$. Then
$$ \hnv(\Gamma, \cU)|F|\le \log N(\cU^F)\le \log N(\bigvee_{j=1}^n\cU_j^F)\le \sum_{j=1}^n\log N(\cU_j^F).$$
Thus there is some $1\le j\le n$ with $\frac{\hnv(\Gamma, \cU)}{n}|F|\le \log N(\cU_j^F)$. Consider the map $\varphi:X\rightarrow \{0, 1, 2\}^F$ defined by
\begin{align*}
(\varphi(x))(s)=\left\{
    \begin{array}{ll}
    i, \mbox{ if } sx\in X\setminus U_{j, i} \mbox{ for some } i\in [2], \\
    0, \mbox{ otherwise}.
    \end{array}
    \right.
\end{align*}
Then $N_{\varphi(X)}=N(\cU_j^F)\ge 2^{b|F|}$. Therefore there is some $J\subseteq F$ with $|J|\ge c|F|$ and $\varphi(X)|_J\supseteq [2]^J$. Then $J$ is an independence set for ${\mathbf A}_j$.
Thus $\cA$ has independence density at least $c$.

(2)$\Rightarrow$(3) is trivial.

(3)$\Rightarrow$(1). Let $\cA=\{{\mathbf A}_1, \dots, {\mathbf A}_n\}\subseteq \sA_{\ge 2}$ be finite pairwise disjoint closed with independence density $q>0$.
For each $1\le j\le n$, write ${\mathbf A}_j$ as $(A_{j, 1}, \dots, A_{j, k_j})$ and set $V_j=X\setminus \bigcup_{i=1}^{k_j}A_{j, i}$ and $\cU_j=\{A_{j, 1}\cup V_j, \dots, A_{j, k_j}\cup V_j\}$. Then $\cU_1, \dots, \cU_n$ are finite open covers of $X$. Set $\cU=\bigvee_{j=1}^n\cU_j$. We claim that $\hnv(\Gamma, \cU)>0$. Let $F\in \cF(\Gamma)$. Then there are some $J\subseteq F$ with $|J|\ge q|F|$ and some $1\le j\le n$ such that $J$ is an independence set for ${\mathbf A}_j$. We have
$$ N(\cU^F)\ge N(\cU_j^F)\ge N(\cU_j^J)\ge k_j^{|J|},$$
and hence
$$\frac{1}{|F|}\log N(\cU^F)\ge \frac{|J|}{|F|}\log k_j\ge q\log k_j\ge q\log 2.$$
Therefore $\hnv(\Gamma, \cU)\ge q\log 2>0$.
\end{proof}

Let $\cA\subseteq \sA$. We say that $\cA'\subseteq \sA$  is a {\it simple splitting} of $\cA$ if there are some ${\mathbf A}\in \cA$ with ${\mathbf A}=(A_1, \dots, A_k)$
and some $1\le j\le k$ and $A_j=A_{j, 1}\cup A_{j, 2}$ such that
$$ \cA'=(\cA\setminus \{{\mathbf A}\})\cup \{(A_1, \dots, A_{j-1}, A_{j, 1}, A_{j+1}, \dots, A_k), (A_1, \dots, A_{j-1}, A_{j, 2}, A_{j+1}, \dots, A_k)\}. $$
We say that $\cA'\subseteq \sA$ is a {\it splitting} of $\cA$ if there are $\cA=\cA_1, \cA_2, \dots, \cA_m=\cA'$ such that $\cA_{j+1}$ is a simple splitting of $\cA_j$ for all $1\le j\le m-1$. Clearly splittings of pairwise disjoint families are still pairwise disjoint.

We need the following lemma \cite[Lemma 3.7]{KL07} \cite[Lemma 12.16]{KL16}, which is a consequence of Karpovsky and Milman's generalization of the Sauer-Perles-Shelah lemma \cite{KM, Sauer, Shelah}.

\begin{lemma} \label{L-comb2}
Let $k\ge 1$. Then there is some $c>0$ depending only on $k$ such that for any ${\mathbf A}\in \sA_k$, any simple splitting $\{{\mathbf A}_1, {\mathbf A}_2\}$ of $\{\mathbf A\}$, and any finite independence set $J$ for $\mathbf A$, there is an $I\subseteq J$ such that $|I|\ge c|J|$ and $I$ is an independence set for at least one of ${\mathbf A}_1$ and ${\mathbf A}_2$.
\end{lemma}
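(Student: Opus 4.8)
The plan is to recast the splitting as a shattering problem over an enlarged alphabet and then to reduce to a Sauer--Shelah/Karpovsky--Milman count of exactly the type underlying Lemma~\ref{L-comb1}. Write $\mathbf{A}=(A_1,\dots,A_k)$, and suppose the simple splitting replaces $A_j$ by $A_{j,1}\cup A_{j,2}$, so that $\mathbf{A}_1$ and $\mathbf{A}_2$ use $A_{j,1}$, resp.\ $A_{j,2}$, in slot $j$. Introduce the $(k+1)$-letter alphabet $\Lambda=\{1,\dots,j-1,\,j',\,j'',\,j+1,\dots,k\}$ together with sets $C_\ell=A_\ell$ for $\ell\neq j$, $C_{j'}=A_{j,1}$, $C_{j''}=A_{j,2}$, and let $\pi\colon\Lambda\to[k]$ collapse $j',j''\mapsto j$ and fix the remaining letters. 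Put
\[
S=\Big\{\psi\in\Lambda^{J}:\ \bigcap_{s\in J}s^{-1}C_{\psi(s)}\neq\emptyset\Big\}.
\]
Writing $\Lambda_1=\Lambda\setminus\{j''\}$ and $\Lambda_2=\Lambda\setminus\{j'\}$ (each of size $k$), it suffices to produce $I\subseteq J$ with $|I|\ge c|J|$ and $\Lambda_1^{I}\subseteq S|_I$ or $\Lambda_2^{I}\subseteq S|_I$: indeed, any realizing point for a full coloring of $I$ witnesses, upon restriction, that $I$ is an independence set for $\mathbf{A}_1$ (resp.\ $\mathbf{A}_2$). Moreover, since $J$ is an independence set for $\mathbf{A}$, each $\omega\in[k]^{J}$ has a realizing point whose orbit lands, at every $j$-colored coordinate, in $A_{j,1}$ or in $A_{j,2}$; recording these choices gives a $\psi\in S$ with $\pi\circ\psi=\omega$. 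Hence $\pi(S)=[k]^{J}$, and in particular $|S|\ge k^{|J|}$. The lemma is thereby reduced to a purely combinatorial claim about $S\subseteq\Lambda^J$ with $\pi(S)=[k]^J$.

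The combinatorial claim is a two-sub-alphabet version of Lemma~\ref{L-comb1}: richness of $S$ should force a linear-sized $I$ fully shattered by $\Lambda_1$ or by $\Lambda_2$. The underlying mechanism is transparent when $k=1$: there $\pi(S)=\{1\}^{J}$ forces a single $\psi_0\in S\subseteq\{j',j''\}^{J}$, and since $|\psi_0^{-1}(j')|+|\psi_0^{-1}(j'')|=|J|$, one of the two preimages has size $\ge|J|/2$ and may be taken as $I$. For $k\ge2$ this pigeonhole must be promoted to a shattering statement. I would treat one split-letter as a wildcard: with $j''$ designated as the special symbol, the conclusion $\Lambda_1^{I}\subseteq S|_I$ is exactly what Lemma~\ref{L-comb1} delivers (with its parameter set to $k$), once the associated covering number $N_S^{(1)}$ --- the least number of boxes $\prod_{s}(\Lambda\setminus\{i_s\})$ with $i_s\in\Lambda_1$ needed to cover $S$ --- is at least $k^{b|J|}$ for a fixed $b=b(k)>0$; symmetrically $j'$ as wildcard handles $\mathbf{A}_2$ via $N_S^{(2)}$. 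So the task becomes showing $\max(N_S^{(1)},N_S^{(2)})\ge k^{b|J|}$.

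I expect this last inequality to be the main obstacle, and the precise point where the Karpovsky--Milman generalization of Sauer--Shelah is needed. The difficulty is one of symmetry-breaking: the bound $|S|\ge k^{|J|}$ is distributed in an uncontrolled way between patterns refining $j$ to $j'$ and those refining it to $j''$, and neither covering number is individually governed by $\pi(S)$ --- for instance $S=\Lambda_2^{J}$ has $N_S^{(1)}=1$ yet $N_S^{(2)}$ exponentially large, so one cannot fix a single wildcard in advance. The naive projection of an economical cover of $S$ only covers $\pi(S)$ and loses the very split-information that makes $S$ hard to cover, so a genuinely two-sided count is required: one must run the extremal estimate of Lemma~\ref{L-comb1} for both wildcards at once and charge the $k^{|J|}$ realized patterns to whichever half carries an exponential share, extracting a constant $c$ depending only on $k$ and uniform in $|J|$. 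This simultaneous, two-fold refinement of the Sauer--Shelah argument is exactly the content supplied by Karpovsky--Milman.
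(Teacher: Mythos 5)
First, a point of reference: the paper does not prove this lemma at all --- it is imported verbatim from [KL07, Lemma 3.7] and [KL16, Lemma 12.16] --- so there is no in-paper argument to compare against. Your reduction is correct and is the standard one: pass to the enlarged alphabet $\Lambda$, note that $S|_I\supseteq\Lambda_1^{I}$ (resp.\ $\Lambda_2^{I}$) forces $I$ to be an independence set for $\mathbf{A}_1$ (resp.\ $\mathbf{A}_2$), dispose of $k=1$ by pigeonhole, and observe that for $k\ge 2$ everything hinges on the covering-number bound $\max\bigl(N_S^{(1)},N_S^{(2)}\bigr)\ge k^{b|J|}$ feeding into Lemma~\ref{L-comb1}.

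That bound, however, is exactly where your proof stops: you correctly diagnose the symmetry-breaking difficulty (your example $S=\Lambda_2^{J}$ with $N_S^{(1)}=1$) but then assert that running the estimate ``for both wildcards at once'' is ``exactly the content supplied by Karpovsky--Milman.'' It is not: Karpovsky--Milman is a single-alphabet shattering statement and does not by itself lower-bound the maximum of two differently defined covering numbers. So the crux of the lemma is missing. It is nonetheless true and can be supplied by an elementary count, provided you first replace your $S$ (all realizable patterns) by a selection $S_0$ containing exactly one lift $\tilde\omega$ of each $\omega\in[k]^{J}$, so that $\pi|_{S_0}$ is a bijection onto $[k]^{J}$; this injectivity is precisely the structure your write-up discards, and it is what makes the count close. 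Indeed, if $B=\prod_{s}(\Lambda\setminus\{i_s\})$ is a type-$(1)$ box ($i_s\in\Lambda_1$) and $B'=\prod_{s}(\Lambda\setminus\{i'_s\})$ a type-$(2)$ box ($i'_s\in\Lambda_2$), then each factor $\Lambda\setminus\{i_s,i'_s\}$ has $\pi$-image of size at most $k-1$ (if $i_s\ne i'_s$ the factor itself has only $k-1$ elements; if $i_s=i'_s\in\{2,\dots,k\}$ the image omits that letter), so injectivity gives $|B\cap B'\cap S_0|\le (k-1)^{|J|}$. Intersecting a cover of $S_0$ by $N^{(1)}$ type-$(1)$ boxes with a cover by $N^{(2)}$ type-$(2)$ boxes yields $k^{|J|}=|S_0|\le N^{(1)}N^{(2)}(k-1)^{|J|}$, hence $\max\bigl(N^{(1)},N^{(2)}\bigr)\ge (k/(k-1))^{|J|/2}=k^{b|J|}$ with $b=\log\frac{k}{k-1}/(2\log k)>0$ depending only on $k$; Lemma~\ref{L-comb1}, applied with the wildcard $j''$ or $j'$ according to which covering number is large, then finishes the proof. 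Without some such argument your proposal establishes the reduction but not the lemma.
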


From Lemma~\ref{L-comb2}  we see that for any finite $\cA\subseteq \sA$ with positive independence density, every simple splitting of $\cA$ has positive independence density. Via induction we get

\begin{proposition} \label{P-splitting}
Let $\cA\subseteq \sA$ be finite with positive independence density. Then  every splitting of $\cA$ has positive independence density.
\end{proposition}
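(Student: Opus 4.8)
The plan is to prove Proposition~\ref{P-splitting} by induction on the length $m$ of the splitting chain $\cA=\cA_1, \cA_2, \dots, \cA_m=\cA'$, using Lemma~\ref{L-comb2} as the single-step engine. The base case $m=1$ is vacuous since then $\cA'=\cA$. For the inductive step it suffices to show that a \emph{simple} splitting of a finite family with positive independence density again has positive independence density; the general case then follows by composing these single steps, because each $\cA_{j+1}$ is by definition a simple splitting of $\cA_j$.

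So the crux is the following claim, already flagged in the paragraph preceding the proposition: if $\cA\subseteq \sA$ is finite with independence density $q>0$, and $\cA'$ is a simple splitting of $\cA$, then $\cA'$ has positive independence density. To see this, first I would record the relevant constant. Writing $\cA'=(\cA\setminus\{{\mathbf A}\})\cup\{{\mathbf A}_1,{\mathbf A}_2\}$, where $\{{\mathbf A}_1,{\mathbf A}_2\}$ is a simple splitting of the single tuple $\{\mathbf A\}$, let $k$ be the length of ${\mathbf A}$ and let $c>0$ be the constant supplied by Lemma~\ref{L-comb2} for this $k$. I claim $\cA'$ has independence density at least $cq$. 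Fix $F\in\cF(\Gamma)$. Since $\cA$ has density $q$, there are $J\subseteq F$ with $|J|\ge q|F|$ and some tuple ${\mathbf B}\in\cA$ for which $J$ is an independence set. There are now two cases. If ${\mathbf B}\neq {\mathbf A}$, then ${\mathbf B}\in\cA'$ as well, so $J$ itself witnesses density for $\cA'$ on $F$, and $|J|\ge q|F|\ge cq|F|$ (note $c\le 1$, since $I\subseteq J$ forces $c|J|\le|J|$). If ${\mathbf B}={\mathbf A}$, then $J$ is a finite independence set for ${\mathbf A}$, so Lemma~\ref{L-comb2} yields $I\subseteq J$ with $|I|\ge c|J|\ge cq|F|$ such that $I$ is an independence set for at least one of ${\mathbf A}_1,{\mathbf A}_2$; since both of these lie in $\cA'$, this $I$ witnesses density for $\cA'$ on $F$.

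In either case we have produced, for the arbitrary $F\in\cF(\Gamma)$, a subset of $F$ of size at least $cq|F|$ that is an independence set for some member of $\cA'$. Hence the independence density of $\cA'$ is at least $cq>0$, proving the claim. Feeding this into the induction: set $q_1$ to be the density of $\cA_1=\cA$, and at each step the constant $c$ depends only on the length of the tuple being split, which is bounded by the maximum tuple-length appearing across the finitely many families $\cA_1,\dots,\cA_m$; so after $m-1$ simple splittings the density of $\cA'=\cA_m$ is bounded below by a positive product of $q_1$ with finitely many positive constants, hence is itself positive.

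I do not expect a genuine obstacle here, since the combinatorial heart is entirely outsourced to Lemma~\ref{L-comb2}; the proposition is really just the observation that positivity of the density is stable under the single-step operation and that ``positive'' is preserved under a finite number of multiplications by positive constants. The only point demanding a little care is bookkeeping: one must check that in the case ${\mathbf B}\neq{\mathbf A}$ the \emph{same} witnessing set $J$ transfers without loss to $\cA'$, and that the constant $c$ from Lemma~\ref{L-comb2} can be taken to be at most $1$ so that the estimate $|J|\ge q|F|$ also reads as $\ge cq|F|$; both are immediate. Thus the proof is a clean induction with the single combinatorial input being the Karpovsky--Milman bound encoded in Lemma~\ref{L-comb2}.
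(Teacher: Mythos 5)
Your proposal is correct and follows exactly the route the paper takes (which it leaves almost entirely implicit): Lemma~\ref{L-comb2} handles a single simple splitting, giving density at least $cq$ with $c$ depending only on the tuple length, and the general case follows by induction along the chain of simple splittings. Your case analysis on whether the witnessing tuple $\mathbf B$ is the one being split, and the observation that tuple lengths are unchanged by splitting so the constants stay controlled, are the right details to supply.
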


\section{Positive Independence Density and Li-Yorke Chaos} \label{S-chaos}

In this section we prove Theorem~\ref{T-chaos}, which shows that positive independence density implies Li-Yorke chaos.

\begin{notation} \label{N-power}
Let $E\in \cF(\Gamma)$. For ${\mathbf A}=(A_1, \dots, A_k)\in \sA_k$, we write ${\mathbf A}^E$ for the tuple in $\sA_{k^{|E|}}$ consisting of $\bigcap_{s\in E}s^{-1}A_{\omega(s)}$ for all $\omega\in [k]^E$ in any order.
\end{notation}

For $K, E\in \cF(\Gamma)$, we say that $E$ is {\it $K$-separated} if the sets $Kt$ for $t\in E$ are pairwise disjoint.
The following lemma is an analogue of \cite[Lemma 8.2]{KL13}.

\begin{lemma} \label{L-double}
Let $\cA\subseteq \sA$ be finite with positive independence density. Let $K\in \cF(\Gamma)$. Then there is some finite $\cA'\subseteq \sA$ with positive independence density such that each element of $\cA'$ is of the form ${\mathbf A}^E$ for some ${\mathbf A}\in \cA$ and some $K$-separated $E\in \cF(\Gamma\setminus K)$ with $|E|=2$.
\end{lemma}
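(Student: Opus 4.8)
The plan is to realize every element of $\cA'$ as $\mathbf A^{E}$ with $E=\{h_0,gh_0\}$ for one fixed $h_0\in\Gamma\setminus K$ and $g$ ranging over a finite pool $H\subseteq\Gamma\setminus(K^{-1}K)$, chosen so that $h_0,gh_0\notin K$ for all $g\in H$; these conditions say exactly that each such $E$ is a $K$-separated pair in $\cF(\Gamma\setminus K)$. First I would record the translation calculus for $\mathbf A^{E}$. Writing $\mathbf A=(A_1,\dots,A_k)$, the identity $s^{-1}(t^{-1}A_i)=(ts)^{-1}A_i$ shows that $J'$ is an independence set for $\mathbf A^{\{e_\Gamma,g\}}$ as soon as $J'\cup gJ'$ is an independence set for $\mathbf A$ and $J'\cap gJ'=\emptyset$ (the disjointness is what lets the two halves of each label be chosen independently). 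Two invariances finish the reduction. First, independence sets are stable under right translation: if $J$ is an independence set for a tuple $\mathbf B$ then so is $Jt$, since $\bigcap_{s\in F_0}(st)^{-1}B_{\omega(st)}=t^{-1}\bigcap_{s\in F_0}s^{-1}B_{\omega(st)}$. Second, from $\{h_0,gh_0\}J'=\{e_\Gamma,g\}(h_0J')$ one gets that $J'$ is an independence set for $\mathbf A^{\{h_0,gh_0\}}$ iff $h_0J'$ is one for $\mathbf A^{\{e_\Gamma,g\}}$. Hence it suffices to find a finite $H$ and a constant $q'>0$ so that for every $F$ some $\mathbf A\in\cA$ and $g\in H$ admit $\hat J\subseteq h_0F$ with $|\hat J|\ge q'|F|$, with $\hat J\cup g\hat J$ an independence set for $\mathbf A$ and $\hat J\cap g\hat J=\emptyset$; then $J':=h_0^{-1}\hat J\subseteq F$ is an independence set for $\mathbf A^{\{h_0,gh_0\}}$ of size $\ge q'|F|$. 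This legitimately replaces the ``bad'' base point $e_\Gamma$ (which may lie in $K$) by $h_0$, at the cost of applying the density hypothesis to the translate $h_0F$, whose cardinality is still $|F|$.

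Fix $F$ and apply the hypothesis to $h_0F$, producing $\mathbf A\in\cA$ and an independence set $J\subseteq h_0F$ with $|J|\ge q|F|$, where $q>0$ is the independence density of $\cA$. Granting for the moment a $g\in H$ together with a set $M$ of size $|M|\ge c|F|$ such that both $M$ and $gM$ lie inside a single independence set for $\mathbf A$ (so $M\cup gM$ is again an independence set, being a subset of one), I would finish by disjointifying. The assignment $t\mapsto gt$ makes $M$ into a graph of degree at most two, a disjoint union of paths and cycles; a maximal subset $\hat J\subseteq M$ containing no edge $\{t,gt\}$ satisfies $|\hat J|\ge|M|/3$ and $\hat J\cap g\hat J=\emptyset$, while $\hat J\cup g\hat J\subseteq M\cup gM$ is still an independence set. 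This yields $\hat J$ with $|\hat J|\ge(c/3)|F|$ having the two required properties, and hence the desired $J'\subseteq F$. Thus the whole statement reduces to the purely combinatorial production of one ``popular separated difference'' $g$ drawn from a fixed finite pool.

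The main obstacle is exactly this last point, the naive-entropy counterpart of the permutation pigeonhole behind \cite[Lemma 8.2]{KL13}. In the sofic setting the ambient set $[d_i]$ is carried into itself bijectively by each $\sigma_i(g)$, so averaging $\sum_{g\in H}|I\cap\sigma_i(g)^{-1}I|$ over a fixed finite $H$ forces some $g$ with $|I\cap\sigma_i(g)^{-1}I|\gtrsim q^2d_i$. Here the ambient $F$ is an arbitrary finite subset of $\Gamma$ and is \emph{not} translation closed: a sparse $F$ may have no two elements differing on the left by a bounded amount, so matching inside $J$ alone would force $g$ to grow with $F$ and destroy the finiteness of $H$. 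The remedy I would pursue is to enlarge the ambient before matching: replace $h_0F$ by $\Omega=\bigl(\bigcup_{i}g_0^{\,i}\bigr)h_0F$ for a suitable $g_0$, run the matching inside the independence set supplied by the hypothesis on $\Omega$ using that left multiplication by $g_0$ is almost a bijection of $\Omega$ (a F\o{}lner set for the subgroup $\langle g_0\rangle$), extract $\gtrsim|\Omega|$ pairs of a bounded power $g=g_0^{\,j}$ via the one-dimensional popular-difference estimate, and then use right translation invariance of independence sets to transport the sub-collection of pairs whose left endpoint lies in the distinguished copy $h_0F$ back to a configuration inside $F$. Arranging that a definite fraction of the pairs is deposited in that copy, uniformly in $F$ and with $g$ confined to the fixed pool $H=\{g_0^{\,j}\}$, is the delicate part; this is where Lemma~\ref{L-comb1} (\cite[Lemma 3.3]{KL07}) and the translation invariances are meant to combine, and the choice of $g_0$ (of infinite order, or of sufficiently large finite order) is what I expect to need care.

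Finally I would assemble the pieces. Since $H$ and $\cA$ are finite, $\cA'=\{\mathbf A^{\{h_0,gh_0\}}:\mathbf A\in\cA,\ g\in H\}$ is finite; each pair $E=\{h_0,gh_0\}$ was arranged to be $K$-separated and to lie in $\cF(\Gamma\setminus K)$; and the constant $q'$ extracted above is positive and independent of $F$. Therefore $\cA'$ has positive independence density and has the asserted form, which is precisely the conclusion of the lemma.
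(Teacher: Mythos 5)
Your reduction is sound as far as it goes: the translation calculus for $\mathbf A^E$, the right-translation invariance of independence sets, and the disjointification of $M$ along the degree-at-most-two graph $t\mapsto gt$ (losing only a factor $3$) are all correct, and you have correctly isolated the real content of the lemma as the production of a ``popular separated difference'' $g$ from a fixed finite pool, uniformly in $F$. But that step is exactly what you do not prove, and the plan you sketch for it does not work as stated. If you apply the density hypothesis to $\Omega=\bigl(\bigcup_i g_0^{\,i}\bigr)h_0F$, the independence set $J\subseteq\Omega$ it returns may be concentrated entirely in translates $g_0^{\,i}h_0F$ with $i$ far from $0$, so no positive fraction of the matched pairs need have an endpoint in the distinguished copy $h_0F$; and you cannot move a pair from $g_0^{\,i}h_0F$ into $h_0F$ by a \emph{right} translation, since those two sets differ by a \emph{left} multiplication. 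So the ``transport back to the distinguished copy'' step fails, and with it the claim that $g$ stays in the fixed pool $H=\{g_0^{\,j}\}$ with base point $h_0$.

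The paper's proof repairs precisely this by refusing to distinguish a copy. Fix one $K$-separated $E\in\cF(\Gamma\setminus K)$ with $q|E|\ge 2$ (where $q$ is the independence density of $\cA$), take a \emph{maximal $E$-separated} subset $F'\subseteq F$ (so $|F'|\ge|F|/|E|^2$ and the translates $tF'$, $t\in E$, are pairwise disjoint), and apply the hypothesis to $EF'$ to get an independence set $J$ for some $\mathbf A\in\cA$ with $|J|\ge q|E|\,|F'|\ge 2|F'|$. Pulling the slices back into the common ambient $F'$ via $J_t:=t^{-1}(J\cap tF')$, one has $\sum_{t\in E}|J_t|\ge 2|F'|$, and a short inclusion--exclusion forces two slices to overlap: $|J_s\cap J_t|\ge|F'|/|E|^2\ge|F|/|E|^4$ for some distinct $s,t\in E$. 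Since $s(J_s\cap J_t)$ and $t(J_s\cap J_t)$ are disjoint subsets of $J$, the set $J_s\cap J_t\subseteq F$ is an independence set for $\mathbf A^{\{s,t\}}$, and $\{s,t\}$ ranges over the fixed finite collection of two-element subsets of $E$. Your argument becomes correct if you make the same two changes: let the pair vary over all two-element subsets of one fixed large $K$-separated set rather than anchoring one endpoint at $h_0$, and pass to a maximal separated subset of $F$ so that the slice counting is valid; the pigeonhole on $\sum_{t\in E}|J_t|\ge 2|F'|$ then replaces the ``popular difference'' estimate you were hoping to import from the sofic setting, and no analogue of Lemma~\ref{L-comb1} is needed here.
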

\begin{proof} Denote by $q$ the independence density of $\cA$. Take a $K$-separated $E\in \cF(\Gamma\setminus K)$ with $q|E|\ge 2$.

Let $F\in \cF(\Gamma)$. Take a maximal $E$-separated subset $F'$ of $F$.
Then $E^{-1}EF'\supseteq F$, and hence
$$|F'|\ge |F|/|E|^2.$$

Note that $|EF'|=|E|\cdot |F'|$.
By assumption we can find a $J\subseteq EF'$ with $|J|\ge q|EF'|$ and some ${\mathbf A}\in \cA$ such that $J$ is an independence set for ${\mathbf A}$. For each $t\in E$, set $J_t=t^{-1}(J\cap tF')\subseteq F'$. Since $J\cap tF'$ for $t\in E$ is a partition of $J$, we have
$$\sum_{t\in E}|J_t|=\sum_{t\in E}|J\cap tF'|=|J|\ge q|EF'|=q|E|\cdot |F'|\ge 2|F'|.$$
Denote by $\eta$ the maximum of $|J_s\cap J_t|/|F'|$ for $s, t$ ranging over distinct elements of $E$. Then for each $t\in E$ there is some $W_t\subseteq J_t$ with $|W_t|\le \eta |F'|\cdot|E|$ such that the sets $J_t\setminus W_t$ for $t\in E$ are pairwise disjoint. Thus
\begin{align*}
2|F'|\le \sum_{t\in E}|J_t|=\sum_{t\in E}|W_t|+\big|\bigcup_{t\in E} (J_t\setminus W_t)\big|\le \eta |F'|\cdot |E|^2+|F'|,
\end{align*}
and hence $\eta\ge 1/|E|^2$. Then we can find distinct $s, t\in E$ with
$$|J_s\cap J_t|= \eta |F'|\ge |F'|/|E|^2\ge |F|/|E|^4.$$
Note that $t(J_s\cap J_t)\cup s(J_s\cap J_t)\subseteq J$, and $t(J_s\cap J_t)\cap s(J_s\cap J_t)\subseteq tF'\cap sF'=\emptyset$.
Thus $J_s\cap J_t$ is an independence set for ${\mathbf A}^{\{s, t\}}$. Therefore the set $\cA'$ consisting of ${\mathbf A}^{\{s, t\}}$ for ${\mathbf A}\in \cA$ and distinct $s, t\in E$ has independence density at least $1/|E|^4$.
\end{proof}

From Lemma~\ref{L-double} via induction on $n$ we have

\begin{lemma} \label{L-double double}
Let $\cA\subseteq \sA$ be  finite  with positive independence density. Let $K\in \cF(\Gamma)$ and $n\in \Nb$. Then there is some finite $\cA'\subseteq \sA$ with positive independence density such that each element of $\cA'$ is of the form ${\mathbf A}^E$ for some ${\mathbf A}\in \cA$ and some $K$-separated $E\in \cF(\Gamma\setminus K)$ with $|E|=2^n$.
\end{lemma}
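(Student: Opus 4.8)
The plan is to induct on $n$, the case $n=1$ being precisely Lemma~\ref{L-double}. The engine of the induction is a composition identity for the power operation of Notation~\ref{N-power}: I claim that if $E, E'\in\cF(\Gamma)$ are such that the multiplication map $(s,t)\mapsto st$ is injective on $E\times E'$ — equivalently $|EE'|=|E|\cdot|E'|$ — then $(\mathbf A^E)^{E'}=\mathbf A^{EE'}$ for every $\mathbf A\in\sA$, up to reordering of coordinates. To see this, I would simply expand: a coordinate of $(\mathbf A^E)^{E'}$ indexed by $\psi\in([k]^E)^{E'}$ is $\bigcap_{t\in E'}\bigcap_{s\in E}(st)^{-1}A_{\psi(t)(s)}$, and under the bijection $E\times E'\to EE'$, $(s,t)\mapsto st$, this is exactly the coordinate of $\mathbf A^{EE'}$ indexed by $\omega(st)=\psi(t)(s)$. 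Since Notation~\ref{N-power} permits the coordinates to be listed in any order, the two tuples agree.

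With this identity in hand, the inductive step runs as follows. First I would take the family $\cA_n\subseteq\sA$ furnished by the hypothesis for $n$: it has positive independence density and each of its finitely many elements has the form $\mathbf A^E$ with $\mathbf A\in\cA$ and $E$ a $K$-separated set in $\cF(\Gamma\setminus K)$ of cardinality $2^n$. Let $L$ be the union of all the sets $E$ occurring in $\cA_n$, a finite subset of $\Gamma\setminus K$, and choose any $K'\in\cF(\Gamma)$ with $K\cup L\cup KL\cup L^{-1}K\subseteq K'$. Applying Lemma~\ref{L-double} to $\cA_n$ with the separation set $K'$ produces a finite $\cA'\subseteq\sA$ of positive independence density whose elements have the form $\mathbf B^{E'}=(\mathbf A^E)^{E'}$ with $\mathbf B=\mathbf A^E\in\cA_n$ and $E'$ a $K'$-separated set in $\cF(\Gamma\setminus K')$ with $|E'|=2$.

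It then remains to check that the enlargement $K'$ was arranged so that each such element equals $\mathbf A^{EE'}$ with $EE'$ a $K$-separated subset of $\Gamma\setminus K$ of size $2^{n+1}$, completing the step. Writing $E'=\{t,t'\}$ with $t\neq t'$, the $K'$-separation of $E'$ says exactly that $t(t')^{-1}\notin (K')^{-1}K'$, and I expect the three requirements to fall out of the three containments built into $K'$. The inclusion $L\subseteq K'$ gives $t(t')^{-1}\notin L^{-1}L$, which excludes any collision $st=s't'$ and hence yields $|EE'|=2^{n+1}$, so the composition identity applies and $\mathbf B^{E'}=\mathbf A^{EE'}$. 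The inclusion $KL\subseteq K'$ gives $t(t')^{-1}\notin(KL)^{-1}(KL)=L^{-1}K^{-1}KL$, which is precisely what keeps the translates $Ku$, $u\in EE'$, pairwise disjoint (the subcase of two coordinates sharing the same $E'$-entry reduces to the $K$-separation of $E$ itself). Finally $L^{-1}K\subseteq K'$ together with $E'\subseteq\Gamma\setminus K'$ forces $E'\cap L^{-1}K=\emptyset$, so no product $st$ lands in $K$ and $EE'\subseteq\Gamma\setminus K$.

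The only real work here is this bookkeeping: selecting $K'$ once and for all so that, uniformly over the finitely many $E$ appearing in $\cA_n$, the separation of $EE'$, the full cardinality $|EE'|=|E|\cdot|E'|$, and the containment $EE'\subseteq\Gamma\setminus K$ all hold simultaneously. I anticipate this to be the main obstacle, though each individual verification is a short computation with products $st$; the rest of the argument is formal, with positivity of the independence density being inherited directly from Lemma~\ref{L-double} at every stage.
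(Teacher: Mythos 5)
Your proof is correct and follows the same route as the paper, which simply states that the lemma follows from Lemma~\ref{L-double} by induction on $n$ without supplying any details. Your composition identity $({\mathbf A}^E)^{E'}={\mathbf A}^{EE'}$ (valid once $|EE'|=|E|\cdot|E'|$) and the uniform choice $K'\supseteq K\cup L\cup KL\cup L^{-1}K$ correctly carry out the bookkeeping needed to make that induction go through: $L\subseteq K'$ gives the full cardinality, $KL\subseteq K'$ gives the $K$-separation of $EE'$, and $L^{-1}K\subseteq K'$ gives $EE'\subseteq\Gamma\setminus K$.
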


Fix a compatible metric $\rho$  on $X$. For ${\mathbf A}=(A_1, \dots, A_k)\in \sA_k$, we set
$$\diam({\mathbf A}, \rho)=\max_{1\le j\le k}\diam(A_j, \rho).$$
For finite $\cA\subseteq \sA$, we set
$$\diam(\cA, \rho)=\max_{{\mathbf A}\in \cA}\diam({\mathbf A}, \rho).$$
For any $\varepsilon>0$, clearly every finite closed  $\cA\subseteq \sA$ has a closed splitting with diameter at most $\varepsilon$.

 For ${\mathbf A}\in \sA_k$, we set $|{\mathbf A}|=k$. For $s\in \Gamma$ and ${\mathbf A}=(A_1, \dots, A_k)\in \sA_k$, we set $s{\mathbf A}=(sA_1, \dots, sA_k)\in \sA_k$. The following is an analogue of \cite[Theorem 3.18]{KL07} and \cite[Theorem 8.1]{KL13}.

\begin{theorem} \label{T-chaos}
Let $\cA\subseteq \sA_{\ge 2}$ be  finite pairwise disjoint closed with positive independence density.
Then there are some ${\mathbf A}\in \cA$ and  a Cantor set $Z$  contained in the union of the entries of ${\mathbf A}$ such that for any finite set $Y\subseteq Z$ and any map $f: Y\rightarrow Z$ one has
$$ \liminf_{\Gamma\ni s\to \infty} \max_{y\in Y}\rho(sy, f(y))=0.$$
\end{theorem}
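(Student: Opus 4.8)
The plan is to realize $Z$ as a nested intersection of small closed ``cells'' arranged along a tree, all lying inside the entries of a single tuple ${\mathbf A}\in\cA$, and to arrange that any finitely many of the resulting points can be steered simultaneously, by one group element lying arbitrarily far out, into prescribed small cells. Two mechanisms drive this. First, Proposition~\ref{P-splitting} together with the observation that every finite closed family admits closed splittings of arbitrarily small diameter lets me refine $\cA$ into finite closed pairwise disjoint families of positive independence density whose entries have diameter below any prescribed $\varepsilon$; thus the tree cells can be made to shrink. Second, Lemma~\ref{L-double double} produces, for every $K\in\cF(\Gamma)$ and every $n$, a positive-density family whose members are powers ${\mathbf A}^E$ with $E\subseteq\Gamma\setminus K$ being $K$-separated and $|E|=2^n$; this is precisely the device that pushes the usable independence outside an arbitrary $K$ (forcing $s\to\infty$) while letting a single element govern a whole block of coordinates. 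Throughout, I pass from the finitary condition of Definition~\ref{D-indep set} to actual points by compactness: if $J$ is an independence set for a closed tuple ${\mathbf B}=(B_1,\dots,B_k)$, then for any $\xi\colon J\to[k]$ the closed sets $s^{-1}B_{\xi(s)}$ enjoy the finite intersection property, so their total intersection is nonempty and yields a point whose itinerary over $J$ is $\xi$.

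For the construction I would interleave splitting, doubling, and a compactness (K\"onig's-lemma) selection. Iterating Lemma~\ref{L-double double}, with splitting between applications to keep diameters shrinking, produces at every stage finite positive-density families whose members are split powers of original tuples; since $\cA$ is finite, selecting a coherent branch through these choices pins down one fixed ${\mathbf A}\in\cA$ underlying the blocks that survive, so that the final cells all refine the entries of ${\mathbf A}$ and $Z\subseteq\bigcup_i A_i$. With ${\mathbf A}$ so fixed, I assign to each finite binary string a closed cell, nested, pairwise disjoint at each level, and with diameters tending to $0$; a branch $\xi$ then determines a point $x_\xi$ whose itinerary on the chosen blocks encodes $\xi$, the map $\xi\mapsto x_\xi$ is continuous and injective, and $Z=\{x_\xi\}$ is a Cantor set in $\bigcup_i A_i$.

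The heart is the simultaneous approximation. The conclusion $\liminf_{\Gamma\ni s\to\infty}\max_{y\in Y}\rho(sy,f(y))=0$ is equivalent to: for every $\varepsilon>0$ and every $K\in\cF(\Gamma)$ there is an $s\notin K$ with $sy$ lying in the diameter-$\varepsilon$ cell containing $f(y)$ for each $y\in Y$. Fixing a stage whose cells have diameter below $\varepsilon$, each target $f(y)$ determines a definite cell, i.e.\ a prescribed local pattern the orbit of $y$ should exhibit under $s$. I would guarantee this in advance: during the recursion, for each requirement consisting of finitely many branches (specified up to the relevant resolution) together with a target pattern, I devote a fresh $K$-separated block $E\subseteq\Gamma\setminus K$ furnished by Lemma~\ref{L-double double} and use the independence of ${\mathbf A}^E$ over a witnessing set to arrange the itineraries of those branches so that some $s$ drawn from that block reproduces all the target cells at once; then $sy$ and $f(y)$ share a small cell, so $\rho(sy,f(y))<\varepsilon$, and $s\notin K$ forces $s\to\infty$. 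Since at each resolution there are only finitely many prefixes and finitely many target patterns, the totality of requirements is countable, so a single diagonal enumeration — one fresh far-out, mutually separated block per requirement — makes one fixed $Z$ work for every $(Y,f)$.

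The main obstacle is exactly this coordination: producing one element $s$ that serves all $y\in Y$ simultaneously while staying outside an arbitrary $K$, and doing so universally over all finite $Y$ and all maps $f$. This is where the finite-family formalism and the doubling lemma are indispensable: a single tuple cannot in general be localized to a point, so the joint patterns must be realized at the level of the power tuples ${\mathbf A}^E$, whose independence over far-out $K$-separated blocks of size $2^n$ is what encodes an entire configuration into the behavior of one orbit across one block, and it is this same mechanism that, via the compactness selection, anchors $Z$ in the entries of a single ${\mathbf A}\in\cA$. The remaining work is bookkeeping — merging the diameter-shrinking splittings, the branch selection fixing ${\mathbf A}$, and the enumeration of planting requirements into one coherent recursion, together with the routine checks that the cells stay nested, disjoint, and small so that $Z$ is genuinely a Cantor set.
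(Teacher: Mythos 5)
Your proposal takes essentially the same route as the paper's proof: an inductive construction of finite positive-density families obtained by alternating Lemma~\ref{L-double double} with splittings, with the steering elements for each target pattern $\gamma$ hard-coded into the refined cells as coordinates of the itinerary over the far-out separated block $E$ (the paper reserves two coordinates of $E$ for the doubling and injects all $\ell^{2\ell}$ patterns into the remaining ones), followed by an inverse-limit selection that fixes a single ${\mathbf A}\in\cA$ and yields the Cantor set. The one detail your ``routine bookkeeping'' conceals is that the level-$(m+1)$ cells sit inside the parent cells only after translating by the reserved coordinate $s_0$, so the nesting, the diameter bounds, and the ``$s\notin K$'' condition must all be tracked through the accumulated translates $\xi_m$ (forcing the blocks to be $\xi_m(\cA_m)^{-1}K_m\xi_m(\cA_m)$-separated rather than merely $K_m$-separated), but this is exactly how the paper resolves it and fits within your framework.
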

\begin{proof}
Take an increasing sequence $\{e_\Gamma\}\subseteq K_1\subseteq K_2\subseteq \cdots$ of finite subsets of $\Gamma$ with union $\Gamma$. We shall construct, via induction on $m$, finite  $\cA_m\subseteq \sA$ with the following properties:
\begin{enumerate}
\item $\cA_1\subseteq \cA$,
\item for every $m\ge 2$, there are maps $\pi_m: \cA_m\rightarrow \cA_{m-1}$ and $\zeta_m: \cA_m\rightarrow \Gamma$ such that for every ${\mathbf A}\in \cA_m$ one has $|{\mathbf A}|=2|\pi_m({\mathbf A})|$ and each entry of $\pi_m({\mathbf A})$ contains exactly two entries of $\zeta_m({\mathbf A}){\mathbf A}$,
\item when $m\ge 2$, for every ${\mathbf A}\in \cA_m$ defining $\xi_m({\mathbf A})\in \Gamma$ by $s_j=\zeta_j\pi_{j+1}\pi_{j+2}\cdots\pi_m({\mathbf A})$ for all $2\le j\le m$ and $\xi_m({\mathbf A})=s_2\cdots s_m$, we have $\diam(\xi_m({\mathbf A}){\mathbf A}, \rho)\le 2^{-m}$,
\item when $m\ge 2$, for every ${\mathbf A}\in \cA_m$, writing $\pi_m({\mathbf A})=(B_1, \dots, B_\ell)$ and ${\mathbf A}=(A_1, \dots, A_{2\ell})$, for any map $\gamma: [2\ell]\rightarrow [\ell]$, there is some
    $$u\in \Gamma\setminus \xi_{m-1}(\cA_{m-1})^{-1}K_{m-1}\xi_{m-1}(\cA_{m-1})\zeta_m({\mathbf A})$$
     such that $uA_j\subseteq B_{\gamma(j)}$ for all $j\in [2\ell]$, where $\xi_1(\cA_1)=\{e_\Gamma\}$,
\item for every $m$, $\cA_m$ is pairwise disjoint and closed,
\item for every $m$, $\cA_m$ has positive independence density.
\end{enumerate}

Suppose that we have constructed such $\cA_m$ over all $m$. Removing the elements of $\cA_m$ with some empty entry, we may assume that the entries of the elements of each $\cA_m$ are all nonempty.
Since each $\cA_m$ is nonempty and finite, the inverse limit space $\varprojlim_{m\to \infty} \cA_m$ for the maps $\pi_m$ is nonempty. Thus we can find ${\mathbf A}_m\in \cA_m$ for each $m\in \Nb$ such that $\pi_{m+1}({\mathbf A}_{m+1})={\mathbf A}_m$ for all $m$. For any $m\ge 2$, set
 ${\mathbf A}'_m=\xi_m({\mathbf A}_m){\mathbf A}_m$. Then for each $m\ge 2$, ${\mathbf A}_m'\in \sA_{|{\mathbf A}_1|2^{m-1}}$  and each entry of ${\mathbf A}_{m}'$ contains exactly two entries of ${\mathbf A}_{m+1}'$ by (2), and  $\diam({\mathbf A}_m', \rho)\le 2^{-m}$ by (3). Denote by $Z_m$ the union of the entries of ${\mathbf A}_m'$, and set $Z=\bigcap_{m\ge 2}Z_m$. Then $Z$ is a Cantor set. Since $\xi_2({\mathbf A}_2)=\zeta_2({\mathbf A}_2)$, by (2) the entries of ${\mathbf A}_2'=\xi_2({\mathbf A}_2){\mathbf A}_2=\zeta_2({\mathbf A}_2){\mathbf A}_2$ are contained in the entries of $\pi_2({\mathbf A}_2)={\mathbf A}_1$. Thus $Z\subseteq Z_2$ is contained in the union of the entries of ${\mathbf A}_1$.

 Let $Y\subseteq Z$ be finite, and let $f$ be a map $Y\rightarrow Z$. Let $K\in \cF(\Gamma)$ and $\varepsilon>0$.  Take $m\ge 2$ such that distinct elements of $Y$ lie in distinct entries of ${\mathbf A}'_{m+1}$, $K\subseteq K_m$, and $2^{-m}<\varepsilon$.  Write ${\mathbf A}_m'=(B_1, \dots, B_\ell)$ and ${\mathbf A}_{m+1}'=(A_1, \dots, A_{2\ell})$. Then there is some map $\gamma: [2\ell]\rightarrow [\ell]$ such that for any $y\in Y$, if $y\in A_j$ then $f(y)\in B_{\gamma(j)}$. Set $t_m=\xi_m({\mathbf A}_m)$ and $t_{m+1}=\xi_{m+1}({\mathbf A}_{m+1})=t_m\zeta_{m+1}({\mathbf A}_{m+1})$. Then ${\mathbf A}_m=(t_m^{-1}B_1, \dots, t_m^{-1}B_\ell)$ and
 ${\mathbf A}_{m+1}=(t_{m+1}^{-1}A_1, \dots, t_{m+1}^{-1}A_{2\ell})$. By (4) there is some $u\in \Gamma\setminus \xi_{m}(\cA_{m})^{-1}K_{m}\xi_{m}(\cA_{m})\zeta_{m+1}({\mathbf A}_{m+1})$ such that $ut_{m+1}^{-1}A_j\subseteq t_m^{-1}B_{\gamma(j)}$ for all $j\in [2\ell]$. For every $y\in Y$, say $y\in A_j$ for some $j\in [2\ell]$, one has $t_mut_{m+1}^{-1}y, f(y)\in B_{\gamma(j)}$ and hence
 $$ \rho(t_mut_{m+1}^{-1}y, f(y))\le \diam({\mathbf A}'_m, \rho)\le 2^{-m}<\varepsilon.$$
 Since $t_mut_{m+1}^{-1}\not\in K_m$, we have $t_mut_{m+1}^{-1}\not\in K$. Therefore
 $$\liminf_{\Gamma\ni s\to \infty} \max_{y\in Y}\rho(sy, f(y))=0.$$

We now construct the $\cA_m$. We set $\cA_1=\cA$. By assumption (5) and (6) are satisfied for $m=1$.
Assume that we have constructed $\cA_m$ with the above properties. Take $n\in \Nb$ such that $2^n\ge 2+|{\mathbf A}|^{2|{\mathbf A}|}$ for all ${\mathbf A}\in \cA_m$. By Lemma~\ref{L-double double} we can find a finite  $\cA_m'\subseteq \sA$ with positive independence density such that each element of $\cA_m'$ is of the form ${\mathbf A}^E$ for some ${\mathbf A}\in \cA_m$ and some $\xi_m(\cA_m)^{-1}K_m\xi_m(\cA_m)$-separated $E\in \cF(\Gamma)$ with $|E|=2^n$. Let ${\mathbf A}'\in \cA_m'$ and write it as ${\mathbf A}^E$ as above. Write ${\mathbf A}$ as $(A_1, \dots, A_\ell)$.
 Fix distinct $s_0, s_1\in E$, and take an injection $\varphi: [\ell]^{[2\ell]}\rightarrow E\setminus \{s_0, s_1\}$.
 For all $1\le i\le \ell$ and $1\le j\le 2$,
 take $\omega_{i, j}: E\rightarrow [\ell]$ such that $\omega_{i, j}(s_0)=i$, $\omega(s_1)=j$ and $\omega_{i, j}(\varphi(\gamma))=\gamma(i+(j-1)\ell)$ for all $\gamma: [2\ell]\rightarrow [\ell]$, and set
 $$ A_{i, j}=\bigcap_{s\in E}s^{-1}A_{\omega_{i, j}(s)}.$$
 Then ${\mathbf A}'':=(A_{1, 1}, \dots, A_{\ell, 1}, A_{1, 2}, \dots, A_{\ell, 2})\in \sA_{2\ell}$ is pairwise disjoint and closed, and
 every independence set for ${\mathbf A}'={\mathbf A}^E$ is an independence set for ${\mathbf A}''$. The family $\cA_{m+1}:=\{{\mathbf A}'': {\mathbf A}'\in \cA_m'\}$ clearly satisfies the conditions (5) and (6). Setting $\pi_{m+1}({\mathbf A}'')={\mathbf A}$ and $\zeta_{m+1}({\mathbf A}'')=s_0$, the property (2) is verified. For any map $\gamma: [2\ell]\rightarrow [\ell]$, we have
 $\varphi(\gamma)A_{i, j}\subseteq A_{\gamma(i+(j-1)\ell)}$ for all $1\le i\le \ell$ and $1\le j\le 2$. Since $E$ is $\xi_m(\cA_m)^{-1}K_m\xi_m(\cA_m)$-separated and $s_0\neq \varphi(\gamma)$, we have
 $$\varphi(\gamma)\not\in \xi_m(\cA_m)^{-1}K_m\xi_m(\cA_m)s_0=\xi_m(\cA_m)^{-1}K_m\xi_m(\cA_m)\zeta_{m+1}({\mathbf A}'').$$
 Thus the property (4) also holds.
 Replacing each ${\mathbf A}''$ by a suitable closed splitting of $\{{\mathbf A}''\}$, we also make (3) hold. This finishes the induction step.
 \end{proof}

Now Theorem~\ref{T-Li Yorke} follows from Theorems~\ref{T-positive naive} and \ref{T-chaos}.

\section{Positive Independence Density and Tameness} \label{S-tame}

It was shown in \cite[Theorem 7.1]{KL13} that if ${\mathbf A}\in \sA$ has positive independence density then ${\mathbf A}$ has an infinite independence set.
With a minor modification, the proof also works for finite families in $\sA$:

\begin{theorem} \label{T-IE to tame}
Let $\cA\subseteq \sA$ be finite  with positive independence density. Then at least one element of $\cA$ has an infinite independence set.
\end{theorem}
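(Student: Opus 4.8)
The plan is to reduce the statement about a finite family to the single-tuple case (\cite[Theorem 7.1]{KL13}) by a pigeonhole-with-compactness argument, rather than re-running the whole combinatorial proof.

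First I would recall what positive independence density gives us: for every $F\in \cF(\Gamma)$ there is a $J\subseteq F$ with $|J|\ge q|F|$ (where $q>0$ is the density) and some ${\mathbf A}\in \cA$ for which $J$ is an independence set. Since $\cA=\{{\mathbf A}_1,\dots,{\mathbf A}_n\}$ is finite, a counting/pigeonhole argument along an exhausting sequence $F_1\subseteq F_2\subseteq\cdots$ with $\bigcup_\ell F_\ell=\Gamma$ should let me pick out a single ${\mathbf A}_{j_0}\in\cA$ that serves infinitely often, together with independence sets $J_\ell\subseteq F_\ell$ of size $\ge q|F_\ell|$ for ${\mathbf A}_{j_0}$. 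The point is that for this fixed tuple ${\mathbf A}_{j_0}$ we now have arbitrarily large finite independence sets; more precisely, for each $F\in\cF(\Gamma)$ there is $J\subseteq F$ with $|J|\ge q'|F|$ that is independent for ${\mathbf A}_{j_0}$, i.e.\ the single tuple ${\mathbf A}_{j_0}$ has positive independence density in the sense of \cite[Definition 3.1]{KL13}.

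Making that last sentence precise is the crux, and it is the step I expect to be the \textbf{main obstacle}. The naive pigeonhole only guarantees that for cofinally many $F$ there is a large independent $J$ for the \emph{same} $j_0$; it does not immediately give the \emph{uniform} lower bound ``for every $F$'' required by the single-tuple density. I would resolve this by noting that independence is a monotone/hereditary property: if $J$ is an independence set for ${\mathbf A}$ then so is every subset of $J$. Hence, given an arbitrary $F\in\cF(\Gamma)$, I enlarge it to some $F_\ell\supseteq F$ in the infinite family realizing $j_0$, extract the large independent $J_\ell\subseteq F_\ell$ for ${\mathbf A}_{j_0}$, and intersect with $F$. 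A Følner-free bookkeeping then shows $|J_\ell\cap F|$ is still a definite fraction of $|F|$ once $F_\ell$ is chosen so that $|F_\ell\setminus F|$ is controlled relative to $|F|$—this is exactly where care is needed, since $\Gamma$ need not be amenable and $F_\ell$ can be much larger than $F$. The cleanest route may instead be to invoke the conclusion of \cite[Theorem 7.1]{KL13} only qualitatively: one applies its combinatorial core (via Lemma~\ref{L-comb2} / the Sauer--Shelah type input already cited) to ${\mathbf A}_{j_0}$ to produce an infinite independence set directly from the existence of arbitrarily large finite ones, bypassing the need to reconstruct exact densities.

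Finally I would invoke \cite[Theorem 7.1]{KL13} for the single tuple ${\mathbf A}_{j_0}$: having positive independence density, it admits an infinite independence set, which is the desired conclusion for $\cA$. The ``minor modification'' flagged before the statement presumably refers precisely to inserting this pigeonhole step that collapses the finite family to one tuple while preserving enough density; the rest of the argument then is verbatim that of \cite{KL13}. The residual technical point to verify is that the pigeonhole choice of $j_0$ is compatible with the quantitative density bound used in the original proof, so that no positivity is lost in passing from $\cA$ to $\{{\mathbf A}_{j_0}\}$.
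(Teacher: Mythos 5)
Your central reduction --- pigeonholing to a single tuple ${\mathbf A}_{j_0}\in\cA$ that itself has positive independence density --- is exactly the step that fails, and the paper proves it fails: Proposition~\ref{P-positive naive no orbit IE} constructs, for $\Gamma=\Fb_2$, a finite pairwise disjoint closed $\cA\subseteq\sA_2$ with positive independence density such that \emph{no} element of $\cA$ has positive independence density. (As the paper remarks after that proposition, the reduction you propose does work when $\Gamma$ is amenable, because there the density of a single tuple is a limit along a F{\o}lner sequence; for general, in particular nonamenable, $\Gamma$ it is false.) Your own attempted repair via ``F{\o}lner-free bookkeeping'' cannot succeed, since the target statement (some single tuple has positive density) is simply not true; no amount of care in choosing $F_\ell\supseteq F$ will recover it.

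Your fallback --- apply the combinatorial core of \cite[Theorem 7.1]{KL13} to ${\mathbf A}_{j_0}$ ``only qualitatively,'' deducing an infinite independence set from the existence of arbitrarily large finite ones --- is also not valid. Arbitrarily large finite independence sets correspond to non-nullness, while an infinite independence set corresponds to untameness (see the discussion in Section~\ref{S-introduction}), and the first does not imply the second; the proof in \cite{KL13} genuinely uses the uniform positive density, not merely largeness. The ``minor modification'' the paper intends is of a different nature: one reruns the iterative construction of \cite[Theorem 7.1]{KL13} carrying the whole finite family $\cA$ along, at each stage obtaining a large independence set for \emph{some} member of $\cA$, and pigeonholes inside that construction (where only cofinally many stages for a fixed member are needed), rather than collapsing $\cA$ to one tuple at the outset. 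As written, your argument has a genuine gap at its first and main step.
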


The action $\Gamma\curvearrowright X$ is untame exactly when there is a pairwise disjoint closed ${\mathbf A}\in \sA_{\ge 2}$ with an infinite independence set \cite[Proposition 8.14]{KL16}. Then Theorem~\ref{T-naive to tame} follows from Theorems~\ref{T-positive naive} and \ref{T-IE to tame}.

\section{An Action with Positive Naive Entropy but no non-diagonal Orbit IE-pairs} \label{S-example}

For $k\in \Nb$, recall that $(x_1, \dots, x_k)\in X^k$ is called an {\it orbit IE-tuple} (or {\it orbit IE-pair} when $k=2$) if for any product neighborhood $U_1\times \cdots \times U_k$ of $(x_1, \dots, x_k)$ in $X^k$, the tuple $(U_1, \dots, U_k)$ has positive independence density \cite[Definition 3.2]{KL13}. When $\Gamma$ is amenable, this is the same as IE-tuples defined in \cite[Definition 3.1]{KL07}.

When $\Gamma$ is amenable, $\Gamma\curvearrowright X$ has positive entropy exactly when $X$ has non-diagonal IE-pairs \cite[Proposition 3.9]{KL07} \cite[Theorem 12.19]{KL16}. When $\Gamma$ is sofic and $\Sigma$ is a sofic approximation sequence for $\Gamma$, $\Gamma\curvearrowright X$ has positive sofic entropy with respect to $\Sigma$ exactly when $X$ has non-diagonal $\Sigma$-IE-pairs \cite[Proposition 4.16]{KL13} \cite[Theorem 12.39]{KL16}. For general $\Gamma$, if $X$ has  non-diagonal orbit IE-pairs, then from Theorem~\ref{T-positive naive} we know that $\Gamma\curvearrowright X$ has positive naive entropy. We shall show in Proposition~\ref{P-positive naive no orbit IE} that the converse fails.

Denote by $\Zb\Gamma$ the integral group ring of $\Gamma$ \cite[page 3]{Passman} \cite[Section 13.1]{KL16}.
It consists of all functions $f: \Gamma\rightarrow \Zb$ with finite support. Writing $f$ as $\sum_{s\in \Gamma}f_s s$, the addition and multiplication of $\Zb\Gamma$ are defined by
\begin{align} \label{E-group ring}
\sum_s f_s s+\sum_s g_s s=\sum_s (f_s+g_s)s, \quad (\sum_sf_s s)(\sum_tg_t t)=\sum_t(\sum_sf_sg_{s^{-1}t})t.
\end{align}
It also has an involution $*$ defined by
$$ (\sum_s f_s s)^*=\sum_s f_{s^{-1}} s.$$

For any countable left $\Zb\Gamma$-module $\cM$, its Pontryagin dual $\widehat{\cM}$ consisting of all group homomorphisms $\cM\rightarrow \Rb/\Zb$ under pointwise multiplication and convergence is a compact metrizable abelian group, and $\Gamma$ acts on $\widehat{\cM}$ naturally by continuous automorphisms with $(s\varphi)(x)=\varphi(s^{-1}x)$ for all $\varphi\in \widehat{\cM}$, $s\in \Gamma$ and $x\in \cM$.
We refer the reader to \cite{KL16, LS, Schmidt} for general information on the study of $\Gamma\curvearrowright \widehat{\cM}$.

When $\cM=\Zb\Gamma$, we may identify $\widehat{\cM}$ with $(\Rb/\Zb)^\Gamma$, and the induced $\Gamma$-action on
$(\Rb/\Zb)^\Gamma$ is the left shift action given by $(sx)_t=x_{s^{-1}t}$ for all $x\in (\Rb/\Zb)^\Gamma$ and $s, t\in \Gamma$.

For any submodule $\cM'$ of $\cM$, the restriction map yields a factor map (i.e. a continuous surjective $\Gamma$-equivariant map) $\widehat{\cM}\rightarrow \widehat{\cM'}$. For $f\in \Zb\Gamma$, we have the $\Zb\Gamma$-module $\Zb\Gamma/\Zb\Gamma f$ and denote $\widehat{\Zb\Gamma/\Zb\Gamma f}$ by $X_{f}$. One may identify $X_{f}$ with the closed $\Gamma$-invariant subgroup of $(\Rb/\Zb)^\Gamma$ consisting of $x\in (\Rb/\Zb)^\Gamma$ satisfying $xf^*=0$ \cite[page 311]{Li12}, where the convolution product $xf^*$ is defined similar to \eqref{E-group ring}.

\begin{lemma} \label{L-no orbit IE}
Let $a\in \Gamma$ with infinite order. Then $X_{a-1}$ has no non-diagonal orbit IE-pairs.
\end{lemma}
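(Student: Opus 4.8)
\emph{Proof proposal.}
The plan is to compute $X_{a-1}$ explicitly using the description, recalled just before the statement, of $X_f$ as the set of $x\in(\Rb/\Zb)^\Gamma$ with $xf^*=0$. Here $f=a-1$, so $f^*=a^{-1}-1$, and a direct evaluation of the convolution gives $(xf^*)_t=x_{ta}-x_t$. Thus $X_{a-1}$ is exactly the set of $x\in(\Rb/\Zb)^\Gamma$ satisfying $x_{ta}=x_t$ for all $t\in\Gamma$; equivalently $x$ is constant on each left coset $t\langle a\rangle$, and the coordinate $x_s$ depends only on the coset $s\langle a\rangle$. In particular $X_{a-1}$ may be identified with the full shift $(\Rb/\Zb)^{\Gamma/\langle a\rangle}$ on which $\Gamma$ acts freely by left translation of cosets, and the value of a point on a coset can be prescribed arbitrarily and independently of the other cosets.

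Next I would fix a non-diagonal pair, i.e.\ $x\neq y$ in $X_{a-1}$, and choose $t_0\in\Gamma$ with $x_{t_0}\neq y_{t_0}$. Pick disjoint open arcs $V_1\ni x_{t_0}$ and $V_2\ni y_{t_0}$ in $\Rb/\Zb$, and set
$$
U_1=\{z\in X_{a-1}:z_{t_0}\in V_1\},\qquad U_2=\{z\in X_{a-1}:z_{t_0}\in V_2\}.
$$
These are open, $U_1\ni x$ and $U_2\ni y$, and $U_1\times U_2$ is a product neighborhood of $(x,y)$; to show $(x,y)$ is not an orbit IE-pair it suffices to show that this single tuple $(U_1,U_2)\in\sA_2$ has independence density $0$. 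Since $z\in s^{-1}U_i$ means $z_{s^{-1}t_0}\in V_i$, and $z_{s^{-1}t_0}$ depends only on the coset $s^{-1}t_0\langle a\rangle$, the intersection $\bigcap_{s\in F}s^{-1}U_{\omega(s)}$ is governed entirely by the cosets $s^{-1}t_0\langle a\rangle$.

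The crux is then the following characterization. Because $V_1\cap V_2=\emptyset$ and every point of $X_{a-1}$ is constant on cosets, a set $J\subseteq\Gamma$ is an independence set for $(U_1,U_2)$ \emph{if and only if} the map $s\mapsto s^{-1}t_0\langle a\rangle$ is injective on $J$: if two elements of $J$ land on the same coset, the opposite labelling forces a coordinate into $V_1\cap V_2=\emptyset$, while if they are all distinct one simply assigns the prescribed values on these distinct cosets. Now $s^{-1}t_0\langle a\rangle=(s')^{-1}t_0\langle a\rangle$ is equivalent to $s(s')^{-1}\in H_0:=t_0\langle a\rangle t_0^{-1}$, so the fibers of this map are exactly the right cosets $H_0 s$, and independence sets are precisely the sets meeting each right coset of $H_0$ at most once. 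Since $a$ has infinite order, $H_0\cong\Zb$ is infinite, so for each $N$ one can take $F_N\in\cF(\Gamma)$ with $F_N\subseteq H_0$ and $|F_N|=N$; all of $F_N$ lies in the single right coset $H_0$, so every independence subset of $F_N$ has at most one element. Hence any admissible density $q$ obeys $q\le 1/N$ for all $N$, forcing the independence density of $(U_1,U_2)$ to be $0$. Thus $(x,y)$ is not an orbit IE-pair, and as $x\neq y$ was arbitrary, $X_{a-1}$ has no non-diagonal orbit IE-pairs.

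I expect the only genuinely delicate point to be the bookkeeping of left versus right cosets: getting the defining relation $x_{ta}=x_t$ in the correct handedness, and then correctly identifying the fibers of $s\mapsto s^{-1}t_0\langle a\rangle$ as right cosets of the conjugate subgroup $H_0=t_0\langle a\rangle t_0^{-1}$. Once that is pinned down, the independence-set characterization and the density computation are routine, and the infinitude of $\langle a\rangle$ is precisely what makes a single coset large enough to kill the density.
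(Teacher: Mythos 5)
Your proposal is correct and follows essentially the same route as the paper: the paper tests the density against $F=\{sa^k:k=1,\dots,n\}$, which is exactly a subset of the right coset $H_0s$ of your conjugate subgroup $H_0=t_0\langle a\rangle t_0^{-1}$, and both arguments reduce to the observation that all these translates force the same coordinate $z_{e_\Gamma}$, so independence subsets are singletons and the density is $0$. One inessential slip: the left-translation action of $\Gamma$ on $\Gamma/\langle a\rangle$ is \emph{not} free (the coset $t\langle a\rangle$ is stabilized by $t\langle a\rangle t^{-1}$), and indeed this non-freeness is precisely what your argument exploits; the claim is not used anywhere, so the proof stands.
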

\begin{proof} Note that $X_{a-1}=\{x\in (\Rb/\Zb)^\Gamma: x(a-1)^*=0\}$ consists of exactly those $x\in (\Rb/\Zb)^\Gamma$ satisfying $x_{ta}=x_t$ for all $t\in \Gamma$. Let $x$ and $y$ be distinct points in $X_{a-1}$. Then $x_s\neq y_s$ for some $s\in \Gamma$. Take open neighborhoods $V_x$ and $V_y$ of $x_s$ and $y_s$ in $\Rb/\Zb$ respectively such that $V_x\cap V_y=\emptyset$. Denote by $U_x$ ($U_y$ resp.) the set of $z\in X_{a-1}$ with $z_s\in V_x$ ($z_s\in V_y$ resp.). Then $U_x$ and $U_y$ are neighborhoods of $x$ and $y$ in $X_{a-1}$ respectively. For any distinct $k, m\in \Nb$, if $sa^kz\in U_x$ and $sa^mz\in U_y$ for some $z\in X_{a-1}$, then $z_{e_\Gamma}=z_{a^{-k}}=(sa^kz)_s\in V_x$ and $z_{e_\Gamma}=z_{a^{-m}}=(sa^mz)_s\in V_y$, which is impossible. Thus for any $n\in \Nb$, if $J$ is an independence set for $(U_x, U_y)$ contained in $\{sa^k: k=1, \dots, n\}$, then $|J|\le 1$. Therefore $(U_x, U_y)$ has independence density $0$, whence $(x, y)$ is not an orbit IE-pair of $X_{a-1}$.
\end{proof}

Now let $\Fb_2$ be the rank $2$ free group with generators $a$ and $b$.

\begin{proposition} \label{P-positive naive no orbit IE}
There is an action of $\Fb_2$ on a compact metrizable abelian group $X$ by continuous automorphisms such that $\Fb_2\curvearrowright X$ has positive naive entropy while $X$ has no non-diagonal orbit IE-pairs. Furthermore, there is a finite pairwise disjoint closed $\cA\subseteq \sA_2$ with positive independence density such that no element of $\cA$ has positive independence density.
\end{proposition}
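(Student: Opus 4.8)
The plan is to take $X = X_{a-1}\times X_{b-1}$ with the product $\Fb_2$-action; this is the Pontryagin dual of the $\Zb\Fb_2$-module $\Zb\Fb_2/\Zb\Fb_2(a-1)\oplus \Zb\Fb_2/\Zb\Fb_2(b-1)$, hence a compact metrizable abelian group on which $\Fb_2$ acts by continuous automorphisms. As in Lemma~\ref{L-no orbit IE}, I identify $X_{a-1}$ with $(\Rb/\Zb)^{\Fb_2/\langle a\rangle}$, the coordinate of $x$ at the left coset $t\langle a\rangle$ being $x_t$, and similarly for $X_{b-1}$. Since both $a$ and $b$ have infinite order in $\Fb_2$, Lemma~\ref{L-no orbit IE} applies to each factor.

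For the family, fix disjoint nonempty closed sets $D_0, D_1\subseteq \Rb/\Zb$. Writing $x^{(a)}$ and $x^{(b)}$ for the two coordinates of a point $x\in X$, set $\mathbf{A}_a = (C_0^a, C_1^a)$ with $C_i^a = \{x\in X : x^{(a)}_{e}\in D_i\}$, and define $\mathbf{A}_b$ analogously. These entries are closed and pairwise disjoint, so $\cA = \{\mathbf{A}_a, \mathbf{A}_b\}\subseteq \sA_2$ is finite pairwise disjoint closed. A direct computation of $s^{-1}C_i^a$ shows that $J\subseteq \Fb_2$ is an independence set for $\mathbf{A}_a$ exactly when the elements of $J$ lie in pairwise distinct right cosets $\langle a\rangle s$ (the constraints then sit on distinct coordinates of $x^{(a)}$ and can be met independently), and likewise for $\mathbf{A}_b$ with right $\langle b\rangle$-cosets. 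Thus, writing $m_a(F)$ and $m_b(F)$ for the numbers of right $\langle a\rangle$- and $\langle b\rangle$-cosets meeting $F$, the largest independence set for $\mathbf{A}_a$ (resp. $\mathbf{A}_b$) inside $F$ has size $m_a(F)$ (resp. $m_b(F)$).

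The combinatorial heart is the inequality $m_a(F)+m_b(F)\ge |F|+1$ for every $F\in \cF(\Fb_2)$. I would prove it by forming the bipartite incidence graph $H$ whose vertices are the right $\langle a\rangle$- and $\langle b\rangle$-cosets meeting $F$ and whose edges are the elements $g\in F$, each joining $\langle a\rangle g$ to $\langle b\rangle g$. Since $\langle a\rangle\cap\langle b\rangle=\{e\}$, distinct elements give distinct edges, so $H$ has exactly $|F|$ edges and $m_a(F)+m_b(F)$ vertices. The key point is that $H$ is acyclic: a cycle alternates between the two coset types and telescopes to a relation $b^{q_1}a^{p_1}\cdots b^{q_\ell}a^{p_\ell}=e$ with all exponents nonzero, i.e. a nonempty reduced word equal to $e$, which is impossible in $\Fb_2$. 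Hence $H$ is a forest and $m_a(F)+m_b(F)\ge |F|+1$, so $\max(m_a(F),m_b(F))\ge |F|/2$. Taking a transversal of the larger coset family yields, for each $F$, an independence set of size at least $|F|/2$ for one of $\mathbf{A}_a,\mathbf{A}_b$; thus $\cA$ has independence density at least $1/2$. By Theorem~\ref{T-positive naive} this gives $\hnv(\Fb_2\curvearrowright X)>0$, and $\cA$ is the required family.

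Finally I would verify the two negative assertions. Each single tuple has independence density $0$: for $F=\{e,a,\dots,a^{n-1}\}$, all of which lie in the one right coset $\langle a\rangle$, every independence set for $\mathbf{A}_a$ has at most one element, so its density is $\le 1/n$ for all $n$, and symmetrically for $\mathbf{A}_b$. For the absence of non-diagonal orbit IE-pairs I would use that orbit IE-pairs push forward under the coordinate factor maps $\pi_a\colon X\to X_{a-1}$ and $\pi_b\colon X\to X_{b-1}$: for a product neighborhood $(P_1,P_2)$ in a factor, any independence set for $(\pi^{-1}P_1,\pi^{-1}P_2)$ is one for $(P_1,P_2)$, because equivariance gives $\pi^{-1}\big(\bigcap_s s^{-1}P_{\omega(s)}\big)=\bigcap_s s^{-1}\pi^{-1}P_{\omega(s)}$, so positive independence density descends to the factor. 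Hence an orbit IE-pair $((x_1,y_1),(x_2,y_2))$ of $X$ projects to orbit IE-pairs $(x_1,x_2)$ of $X_{a-1}$ and $(y_1,y_2)$ of $X_{b-1}$, both diagonal by Lemma~\ref{L-no orbit IE}, so the original pair is diagonal. The one step demanding care is the combinatorial inequality, and the decisive realization there is that the incidence graph is a forest; once phrased through the free-group relation this is immediate.
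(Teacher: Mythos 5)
Your proof is correct, but it takes a genuinely different route from the paper's. The paper stays on the single space $X_{a-1}$: positive naive entropy comes from the group-ring fact that $a-1$ and $b-1$ generate a free left ideal, so that $\Zb\Fb_2/\Zb\Fb_2(a-1)$ contains a copy of $\Zb\Fb_2$ and hence $X_{a-1}$ factors onto the full shift $(\Rb/\Zb)^{\Fb_2}$; the absence of non-diagonal orbit IE-pairs is Lemma~\ref{L-no orbit IE} applied directly; and the ``furthermore'' clause is proved non-constructively, by contradiction --- if every positive-density family had a positive-density member, iterating Proposition~\ref{P-splitting} with closed splittings of shrinking diameter would manufacture a non-diagonal orbit IE-pair. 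You instead work on $X_{a-1}\times X_{b-1}$ and exhibit an explicit witness $\cA=\{\mathbf{A}_a,\mathbf{A}_b\}$. Your identification of independence sets for $\mathbf{A}_a$ with subsets meeting each right $\langle a\rangle$-coset at most once is correct, the forest argument for $m_a(F)+m_b(F)\ge |F|+1$ is sound (acyclicity of the coset-incidence graph is exactly the statement that a nonempty alternating word in $a$ and $b$ is nontrivial, i.e.\ the same freeness that the paper invokes through the group ring), and restricting $F$ to $\{e,a,\dots,a^{n-1}\}$ does kill the density of each individual tuple. Your extra step --- that orbit IE-pairs push forward under the coordinate factor maps, so the product has no non-diagonal orbit IE-pairs --- is also valid, though the paper sidesteps it by never leaving $X_{a-1}$. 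What your approach buys is an explicit, quantitative family (density at least $1/2$, each member of density $0$) and positive naive entropy obtained directly from Theorem~\ref{T-positive naive} rather than from a factor onto the Bernoulli shift; what the paper's buys is brevity and reuse of the splitting machinery already in place.
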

\begin{proof} We shall show that $\Fb_2\curvearrowright X_{a-1}$ satisfies the conditions. From Lemma~\ref{L-no orbit IE} we know that $ X_{a-1}$ has no non-diagonal orbit IE-pairs.

Note that $\Zb\Fb_2$ has a free left $\Zb\Fb_2$-submodule with generators $a-1$ and $b-1$ \cite[Corollary 10.3.7.(iv)]{Passman}, and hence $\Zb\Fb_2/\Zb\Fb_2 (a-1)$ contains a $\Zb\Fb_2$-submodule isomorphic to $\Zb\Fb_2$.
Therefore the action $\Fb_2\curvearrowright X_{a-1}$ has a factor $\Fb_2\curvearrowright \widehat{\Zb\Fb_2}=(\Rb/\Zb)^{\Fb_2}$. As naive entropy does not increase under taking factors, we conclude that $\Fb_2\curvearrowright X_{a-1}$ has positive naive entropy.

To prove the last assertion of the proposition, assume conversely that every finite pairwise disjoint closed $\cA\subseteq \sA_2$ with positive independence density has an element with positive independence density. Take a compatible metric $\rho$ on $X_{a-1}$.
Since $\Fb_2\curvearrowright X_{a-1}$ has positive naive entropy, by Theorem~\ref{T-positive naive} there is some finite pairwise disjoint closed $\cA_1\subseteq \sA_2$ with positive independence density. By the assumption there is some $\mathbf{A}_1\in \cA_1$ with positive independence density. Inductively, assume that we have found some  closed $\mathbf{A}_k\in \sA_2$ with positive independence density. Take a finite closed splitting $\cA_{k+1}\subseteq \sA_2$ of $\{\mathbf{A}_k\}$ such that $\diam(\cA_{k+1}, \rho)\le \diam(X_{a-1}, \rho)/2^k$. By Proposition~\ref{P-splitting} we know that $\cA_{k+1}$ has positive independence density. Then by assumption we can find some $\mathbf{A}_{k+1}\in \cA_{k+1}$ with positive independence density. In this way we obtain a sequence $\{\mathbf{A}_k\}_{k\in \Nb}$ of closed elements in $\sA_2$ such that each $\mathbf{A}_k$ has positive independence density and $\diam(\mathbf{A}_k, \rho)\to 0$ as $k\to \infty$. Writing $\mathbf{A}_k=(A_{k, 1}, A_{k, 2})$, we may assume that $A_{k+1, i}\subseteq A_{k, i}$ for all $k\in \Nb$ and $i=1, 2$. Then for each $i=1, 2$, the intersection $\bigcap_{k\in \Nb}A_{k, i}$ is a singleton $\{x_i\}$. As $A_{1, 1}\cap A_{1, 2}=\emptyset$, we have $x_1\neq x_2$. Then $(x_1, x_2)$ is a non-diagonal orbit IE-pair, which is a contradiction. This proves the last assertion of the proposition.
\end{proof}

When $\Gamma$ is amenable, the independence density for each ${\mathbf A}\in \sA$ is a limit \cite[page 287]{KL16} and hence every finite $\cA\subseteq \sA$ with positive independence density has an element with positive independence density. Proposition~\ref{P-positive naive no orbit IE} shows that this fails for $\Fb_2$.

From \cite[Propositions 4.6 and 4.16]{KL13} we know that when $\Gamma$ is sofic, if $\Gamma\curvearrowright X$ has positive sofic entropy with respect to some sofic approximation sequence of $\Gamma$, then $X$ has a non-diagonal orbit IE-pair. Now let $\Fb_2\curvearrowright X$ be an action in Proposition~\ref{P-positive naive no orbit IE}. As $\Fb_2$ is sofic, $X$ has no non-diagonal orbit IE-pairs, and $\Fb_2\curvearrowright X$ has a fixed point, we conclude that $\Fb_2\curvearrowright X$ has sofic entropy zero with respect to every sofic approximation sequence of $\Fb_2$.
Thus results of \cite{KL13} do not tell us that $\Fb_2\curvearrowright X$ is Li-Yorke chaotic or untame.
On the other hand,
since $\Fb_2\curvearrowright X$ has positive naive entropy,  Theorems~\ref{T-Li Yorke} and \ref{T-naive to tame} imply that $\Fb_2\curvearrowright X$ is Li-Yorke chaotic and untame.


\end{document}